\newcommand{\R}{\mathbb R}
\newcommand{\sgn}{\text{sgn}}
\newtheorem*{remarks}{Remarks}
\newtheorem{theorem}{Theorem}[section]
\newtheorem{proposition}[theorem]{Proposition}
\newtheorem{lemma}[theorem]{Lemma}
\newtheorem{definition}[theorem]{Definition}
\title[The  ${\text r}$BO equation]{Well-posedness and ill-posedness results  for the regularized  Benjamin-Ono equation in weighted Sobolev spaces}
\author{Germ\'an Fonseca}
\address[G. Fonseca]{Departamento  de Matem\'aticas\\
Universidad Nacional de Colombia\\ Bogota\\Colombia}
\email{gefonsecab@unal.edu.co}
\author{Guillermo Rodríguez-Blanco}
\address[G. Rodriguez]{Departamento  de Matem\'aticas\\
Universidad Nacional de Colombia\\ Bogota\\Colombia}
\email{grodriguezb@unal.edu.co}
\author{Wilson Sandoval}
\address[W. Sandoval]{Departamento  de Matem\'aticas\\
Universidad Nacional de Colombia\\ Bogota\\Colombia}
\email{wsandoval@unal.edu.co}
\date{}
\begin{document}
\keywords{Benjamin-Ono equation,well-posedness, weighted Sobolev spaces}
\subjclass{Primary: 35B05. Secondary: 35B60}
\begin{abstract} We consider the initial value problem associated to the regularized Benjamin-Ono equation, rBO.
Our aim is to establish  local and global well-posedness results in  weighted Sobolev
spaces via contraction principle. We also prove a unique continuation property that implies that arbitrary polinomial type decay is not preserved yielding  sharp results regarding well-posedness of the initial value problem in most weighted Sobolev spaces. 
\end{abstract}

\maketitle

\section{Introduction.}
In this work we shall study  the initial value problem (IVP)
for the   regularized Benjamin-Ono (rBO) equation
\begin{equation}\label{rBO}
\begin{cases}
\partial_t u+\partial_x u+\mathcal H\partial^2_{xt} u+u\partial_x u = 0, \qquad t, x\in \R,\;\;\;\; \\
u(x,0) = \varphi(x),
\end{cases}
\end{equation}
where  $\mathcal H$ denotes the Hilbert transform,
\begin{equation*}
\mathcal H f(x)=\frac{1}{\pi}\lim_{\epsilon\downarrow 0}\int\limits_{|y|\ge \epsilon} \frac{f(x-y)}{y}\,dy=(-i\,\sgn(\xi) \widehat{f}(\xi))^{\vee}(x).
\end{equation*}
The regularized BenjaminOno equation, rBO,  models propagation of long-crested waves at
the interface between two immiscible fluids.  In particular this equation is appropiate to describe the
pycnocline in the deep ocean, and the two-layer system created by the inflow of fresh water from a
river into the sea ( see \cite{Kal} and references therein). A related equation associated with the same type of phenomena is the Benjamin-Ono equation

\begin{equation}\label{BO}
\partial_t u+\mathcal H\partial^2_{xx} u+u\partial_x u = 0, \qquad t, x\in \R,\;\;\;\;
\end{equation}
which was introduced by Benjamin \cite{Be} and Ono \cite{On}  as a model for long internal gravity waves in deep stratified fluids. Both equations admit solitary wave solutions but it is well known that BO equation defines a completely integrable system, however,  for the rBO equation,  numerical simulations suggest that this property does not hold, see \cite{Kal} and \cite{BoKa}. 

On the other hand it is interesting to notice that for both equations the dispersive part is given by a non-local operator involving the Hilbert transform , however, it is possible to find solutions with Picard iterations in the Sobolev spaces, $H^s(\R) = \left(1-\partial^2_x\right)^{-s/2} L^2(\R),$ for the rBO equation but it was recently shown that this scheme can not be applied for the BO equation, see \cite{MoSaTz} and \cite{KoTz2}. Numerical studies take advantage of this fact and give evidence of the better suitability of the rBO equation over the BO equation for modelling purposes, see \cite{Kal}. 

In \cite{AlBo} and \cite{BoKa}  the initial value problem \ref{rBO} was shown to be globally well posed in $H^{s}(\R), s\geq \frac32.$ Recently, in \cite{An}, rBO equation  was considered in the periodic and in the continuos setting. There it was shown that \ref{rBO} was locally and globally well-posed  in $H^{s}, s> \frac12.$  It is worth to mention that in \cite{An} it is proved that when considering negative indexes, {\it i,e} $s<0,$ the map data-solution for the rBO flow is not $C^2$ and therefore Picard's iteration fails for those rough Sobolev spaces.

Our goal in this paper is to obtain sharp well-posedness results in weighted Sobolev spaces of $L^2-$ type, $\Im_{s,r} = H^s(\mathbb{R})\cap L^2_r(\mathbb{R})$, where $L_r^2(\mathbb{R})=\{f\in L^2(\mathbb{R})\vert$ $  |x|^rf \in L^2(\mathbb{R}) \}$, $r\in\R $. It follows that $\Im_{s,r} $ is a Banach space endowed with the norm  $\Vert f\Vert_{s,r}^2=\Vert f\Vert_s^2 +\Vert f\Vert_{L_{r}^2}^2. $

Regarding the BO equation \ref{BO}, it was recently shown in \cite{FoPo} that arbitrary decay is not preserved, and that indeed there is an upper limit bound for the decay of solutions in order to guarantee well-posedness in those spaces which expressed in terms of the index $r$ tells us that $r$ should be less than $7/2$ which contrasts with another wave propagation famous model, the Korteweg-de Vries equation  \cite{KdV}, for which the Schwartz class is preserved, \cite{Ka}. These type of results are of special interest from the point of view of the qualitative properties of the solitary waves for the respective flow. It is well known that for the BO the profile of its solitary waves has mild decay, $r<\frac32,$ whereas for the KdV the solitary waves decay exponentially. 

The discussion for the rBO equation in these weighted Sobolev spaces is contained in our main theorems

\begin{theorem}\label{Main1} Let $\varphi \in \Im_{s,r}(\mathbb{R}), s>1/2, r=1,2,$ then there  exists a unique  $u\in C([0,\infty);\Im_{s,r})$ solution of the initial value problem (\ref{rBO}) such that, $\partial _tu\in C([0,\infty);L^2(\mathbb{R})).$
\label{integercase}
\end{theorem}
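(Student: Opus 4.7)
The plan is to recast \eqref{rBO} as a Duhamel integral equation by applying the Fourier multiplier $(1+\mathcal H\partial_x^2)^{-1}$, whose symbol in $\xi$ is $(1+|\xi|)^{-1}$. This produces
$$u(t)=W(t)\varphi-\int_0^t W(t-\tau)\,(1+|D|)^{-1}(u\,\partial_x u)(\tau)\,d\tau,$$
where $W(t)$ is the unitary group on $L^2$ with symbol $e^{-it\phi(\xi)}$, $\phi(\xi)=\xi/(1+|\xi|)$. Since $\phi$ is real and $(1+|D|)^{-1}\p_x$ has a bounded symbol, global well-posedness in $H^s$ for $s>\tfrac12$ is already at hand from \cite{An}. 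The task is therefore to propagate the weight: I would run a contraction in $C([0,T];\Im_{s,r})$ on a closed ball, and then globalize via Gronwall using the global $H^s$ bound.

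The estimates rest on differentiating the Fourier transform, $\widehat{xg}(\xi)=i\partial_\xi\hat g(\xi)$. For $r=1$ this gives the identities
$$xW(t)\varphi=W(t)(x\varphi)+t\,W(t)\bigl[\phi'(D)\varphi\bigr],\qquad \phi'(\xi)=(1+|\xi|)^{-2},$$
and $x(1+|D|)^{-1}f=(1+|D|)^{-1}(xf)+T_1 f$, where $T_1$ is the multiplier with bounded symbol $-i\,\sgn(\xi)(1+|\xi|)^{-2}$. Combined with the algebra property of $H^s$ for $s>\tfrac12$, these identities yield
$$\|xu(t)\|_{L^2}\lesssim \|x\varphi\|_{L^2}+\int_0^t\bigl[\|xu\|_{L^2}\|u\|_{H^s}+\|u\|_{H^s}^2+\|u\|_{L^2}\bigr]d\tau,$$
and Gronwall closes the $r=1$ case on $[0,\infty)$.

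The $r=2$ case is the delicate point and the main obstacle. Iterating once more, the group symbol produces $\phi''(\xi)=-2\,\sgn(\xi)(1+|\xi|)^{-3}$, which is bounded (it has a jump at the origin but no Dirac mass, since $\phi'$ is continuous there), while the multiplier symbol of $(1+|D|)^{-1}$ yields $\psi''(\xi)=2(1+|\xi|)^{-3}-2\delta(\xi)$, which \emph{does} carry a Dirac mass at $0$. The Dirac contribution to $x^2(1+|D|)^{-1}f$ equals $2\delta(\xi)\hat f(0)=2\delta(\xi)\int f$, which is not in $L^2$ unless $\int f=0$. The decisive cancellation is
$$\int_{\R} u\,\partial_x u\,dx=\tfrac12\int_{\R}\partial_x(u^2)\,dx=0,\qquad \int_{\R}\partial_x u\,dx=0,$$
so the Dirac contribution vanishes identically in both the linear drift and the nonlinear term; only the bounded symbol $2(1+|\xi|)^{-3}$ survives, and the remaining estimates parallel the $r=1$ argument.

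With these ingredients the contraction in $C([0,T];\Im_{s,r})$ for small $T$ is routine, and the solution is extended to $[0,\infty)$ by iterating on adjacent intervals and using the global $H^s$ bound together with Gronwall on the weighted norm. For the time regularity, $\partial_t u=-(1+|D|)^{-1}\p_x u-\tfrac12(1+|D|)^{-1}\p_x(u^2)\in C([0,\infty);L^2)$ because $(1+|D|)^{-1}\p_x$ is a bounded multiplier on $L^2$ and $u^2\in L^2$ via the Sobolev embedding $H^s\hookrightarrow L^\infty$ for $s>\tfrac12$. The fact that the $r=2$ estimate closes only because of the zero-mean cancellation is precisely what one expects to preclude for $r>2$, foreshadowing the ill-posedness/unique-continuation results advertised in the abstract.
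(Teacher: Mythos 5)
Your proposal is correct and follows essentially the same strategy as the paper: Duhamel's formula for the group with symbol $e^{-it\xi/(1+|\xi|)}$, commutator identities obtained by differentiating the symbols in $\xi$, a contraction in $C([0,T];\Im_{s,r})$, and globalization via the global $H^s$ theory together with a Gronwall estimate on the weighted norm. The one structural difference is in the bookkeeping of the Dirac mass. You write the nonlinearity as $(1+|D|)^{-1}(u\,\partial_xu)$, so the second $\xi$-derivative of the multiplier symbol $(1+|\xi|)^{-1}$ produces a term $-2\delta$, which you must then cancel using $\int u\,\partial_xu\,dx=0$. The paper instead keeps the full operator $\partial_x(1+\mathcal H\partial_x)^{-1}$ together (Lemma \ref{operator}): its symbol $i\xi/(1+|\xi|)$ has a \emph{continuous} first derivative $i(1+|\xi|)^{-2}$, so no Dirac mass arises at the level $r=2$ and no cancellation is needed; deltas first appear in $\partial_\xi^3F$, which is precisely where Theorem \ref{pesosgrupo} imposes moment conditions. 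Both routes are valid -- yours needs the extra (true) observation that $u\,\partial_xu$ has zero mean, while the paper's grouping avoids the issue entirely; the grouping also matters quantitatively when you estimate $\|x\,(1+|D|)^{-1}(u\,\partial_xu)\|_{L^2}$, since for $s\in(1/2,1)$ you cannot place $\partial_xu$ in $L^2$ or $L^\infty$ and must keep the factor $\xi$ from $\widehat{\partial_x(u^2)}$ paired with $(1+|\xi|)^{-1}$ so that only $\|u^2\|_{L^2}$ and $\|xu^2\|_{L^2}$ appear (your schematic bound is consistent with this). One caveat on your closing remark: the zero-mean cancellation at $r=2$ does not foreshadow ill-posedness for all $r>2$; Theorem \ref{Main2} extends well-posedness to all $r<5/2$, and the genuine obstruction at $r=5/2$ is the half-derivative of $\sgn(\xi)$ coming from $\partial_\xi^2F$, not the Dirac mass you encounter here.
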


\begin{theorem}\label{Main2} Let $\varphi \in \Im_{s,r}(\mathbb{R})$ with $ s>1/2$ and $0<r<5/2$ then there  exists a unique  $u\in C([0,\infty);\Im_{s,r})$ solution of the initial value problem (\ref{rBO}) such that, $\partial _tu\in C([0,\infty);L^2(\mathbb{R})).$
\end{theorem}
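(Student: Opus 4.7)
The plan is to recast (\ref{rBO}) as an ODE on Sobolev spaces and then propagate the $H^s$ global well-posedness of \cite{An} to the weighted setting via a fixed-point argument. Applying the Fourier multiplier $(I+\mathcal H\partial_x)^{-1}$ (symbol $1/(1+|\xi|)$, bounded on every $H^s$) converts the equation into
\[
\partial_t u = -K\bigl(u+\tfrac12 u^2\bigr),
\]
where $K$ is the Fourier multiplier with symbol $m(\xi)=i\xi/(1+|\xi|)$; since $|m(\xi)|\le 1$, $K$ is bounded on each $H^s(\R)$. The integral formulation
\[
u(t)=\varphi-\int_0^t K\bigl(u(t')+\tfrac12 u(t')^2\bigr)\,dt'
\]
is the platform for the contraction argument. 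From \cite{An} one already has $u\in C([0,\infty);H^s)$ for $s>1/2$, so it suffices to propagate the $L^2_r$ norm globally and then invoke the standard fixed-point machinery.

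Next, I would set up a weighted estimate on the Fourier side. The integer cases $r=0,1,2$ are handled by Theorem \ref{Main1}, so only the genuinely fractional sub-cases $r=N+\theta$, $N\in\{0,1,2\}$, $\theta\in(0,1)$, $r<5/2$, require new analysis. Using the Plancherel identity together with the Stein fractional derivative characterization
\[
\|x^{N+\theta}f\|_{L^2}\sim \|\partial_\xi^N\hat f\|_{L^2}+\|\mathcal D^\theta_\xi\partial_\xi^N\hat f\|_{L^2},
\]
I apply $\partial_\xi^N\mathcal D^\theta_\xi$ to the Duhamel identity and reduce everything to bounding $\|\mathcal D^\theta_\xi\partial_\xi^N(m\,\hat v)\|_{L^2}$ with $v=u+u^2/2$. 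Expanding by the Leibniz formula and then by the Kenig--Ponce--Vega fractional Leibniz rule, one isolates factors $\partial_\xi^j m$ with $j\le N$ alongside derivatives of $\hat v$. Direct computation gives $m'(\xi)=i(1+|\xi|)^{-2}$ and $m''(\xi)=-2i\,\sgn(\xi)(1+|\xi|)^{-3}$, both pointwise bounded. The nonlinear factor is absorbed via $\|x^r u^2\|_{L^2}\le C\|u\|_\infty\|x^r u\|_{L^2}\le C\|u\|_{H^s}\|x^r u\|_{L^2}$ (Sobolev embedding $H^s\hookrightarrow L^\infty$ for $s>1/2$), and Gronwall then yields a bound on $\|x^r u(t)\|_{L^2}$ for all $t\ge 0$.

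The hardest step, and the source of the sharp threshold $r<5/2$, is the fractional analysis in the sub-case $N=2$. Here $\partial_\xi^2 m$ has a jump discontinuity at the origin, and the Stein derivative $\mathcal D^\theta$ of a step function lies in $L^2_{\text{loc}}$ only when $\theta<1/2$. Above this threshold $\mathcal D^\theta\partial_\xi^2 m$ develops a non-$L^2$ singularity at $\xi=0$ whose pairing with $\hat v$ introduces the unavoidable quantity $\hat v(0)=\int v\,dx$, which is not generically zero and not preserved by the flow; this dovetails with the unique continuation property discussed separately and shows why $r=5/2$ is the precise cut-off. Uniqueness and continuous dependence in $C([0,\infty);\Im_{s,r})$ follow by applying the same weighted estimate to the difference of two solutions, combined with the contraction in $C([0,T];H^s)$.
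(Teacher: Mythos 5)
Your proposal is correct and its architecture matches the paper's: write $r=2+\theta$ with $\theta\in(0,\tfrac12)$, pass to the Fourier side, expand $\partial_\xi^2$ of the symbol against $\widehat\varphi$ by Leibniz, isolate the single delicate term carrying $\sgn(\xi)$ (coming from $\partial_\xi^2\bigl(\tfrac{i\xi}{1+|\xi|}\bigr)=-2i\,\sgn(\xi)(1+|\xi|)^{-3}$), and close with a contraction plus a Gronwall bound using $\|x^r u^2\|_0\le \|u\|_\infty\|x^r u\|_0$ and the global $H^s$ theory of \cite{An}. Two genuine differences are worth recording. First, you treat the equation as an ODE with bounded generator and put the linear term inside the time integral, whereas the paper works with the group $E(t)=e^{tA}$ and proves weighted group estimates (Theorem \ref{pesosgrupo}, \eqref{wlinearestimate}); since everything reduces to the boundedness of the multiplier $i\xi/(1+|\xi|)$ on $\Im_{s,r}$, both formulations are legitimate and yours trims some bookkeeping. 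Second, and more substantively, at the crux you control the $\sgn(\xi)$ factor via the Stein derivative of a step function, $\mathcal D^\theta\sgn(\xi)=c|\xi|^{-\theta}$, square-integrable near the origin precisely when $\theta<\tfrac12$; the paper instead recognizes $\sgn(\xi)\widehat{g}(\xi)$ as the Fourier transform of $i\mathcal Hg$ and applies the Hunt--Muckenhoupt--Wheeden theorem \eqref{apb} with the $A_2$ weight $|x|^{2\theta}$, admissible exactly for $2\theta\in(-1,1)$ by \eqref{cond|x|}. The two tools detect the same threshold (compare \cite{FoPo} with \cite{GFFLGP1}); to make your version rigorous you should state the pointwise Leibniz inequality $\mathcal D^\theta(fg)\le |f|\,\mathcal D^\theta g+\|g\|_\infty\mathcal D^\theta f$ and use the boundedness and decay of $\widehat\varphi(\xi)/(1+|\xi|)^3$ to absorb the global $|\xi|^{-\theta}$ singularity, whereas the paper's weighted-Hilbert-transform route yields the bound in one line. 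Your closing remark that $\widehat v(0)$ obstructs the estimate for $\theta\ge\tfrac12$ is the mechanism of Theorem \ref{Main3} and is not needed for the present statement.
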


\begin{theorem}\label{Main3} Let $u\in C([0,T]:\Im_{s,2})$ be the  solution of the initial value problem \ref{rBO} with  initial data, $\varphi,$ having mean value $\int \varphi(x)\,dx\geq 0.$ Then if at times $t_1=0<t_2<T$ it holds that $u(t_j)\in \Im_{s,\frac52}, j=1,2$ then $u$ must be identically zero
\end{theorem}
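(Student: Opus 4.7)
The plan is to perform a Fourier-side low-frequency analysis on the time-integrated equation, in the spirit of the argument used for BO in \cite{FoPo}: the Duhamel identity forces a specific $\xi|\xi|$-type ``cusp'' in $\hat u(\cdot,t_2)-\hat u(\cdot,0)$ at $\xi=0$ whose coefficient must vanish in the space $H^{5/2}_\xi$, and this vanishing will force $u\equiv 0$.

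First I would rewrite the equation (reading $\partial^2_{xt}=\partial_x\partial_t$) as $(I+\mathcal H\partial_x)\partial_t u=-\partial_x(u+u^2/2)$ and integrate in $t$ over $[0,t_2]$ to get
\[
(I+\mathcal H\partial_x)\bigl[u(\cdot,t_2)-u(\cdot,0)\bigr]=-\partial_x V,\qquad V(x):=\int_0^{t_2}\!\!\bigl[u(x,\tau)+\tfrac12 u^2(x,\tau)\bigr]d\tau.
\]
Using that the symbol of $\mathcal H\partial_x$ is $|\xi|$, the Fourier transform gives $(1+|\xi|)\hat w(\xi)=-i\xi\,\hat V(\xi)$ with $\hat w:=\hat u(\cdot,t_2)-\hat u(\cdot,0)$, hence $\hat w(\xi)=-B(\xi)\hat V(\xi)$ with $B(\xi)=i\xi/(1+|\xi|)$. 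Mass conservation, $\int u(\cdot,t)\,dx\equiv c_0$, yields
\[
\hat V(0)=t_2 c_0+\tfrac12\int_0^{t_2}\|u(\tau)\|_{L^2}^2\,d\tau=:\gamma\geq 0,
\]
with equality if and only if $c_0=0$ and $u\equiv 0$ on $[0,t_2]$. So it suffices to derive a contradiction from $\gamma>0$.

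Expanding $1/(1+|\xi|)=1-|\xi|+\xi^2-\cdots$ near $0$ gives the key low-frequency behavior
\[
B(\xi)=i\xi-i\xi|\xi|+i\xi^3+O(\xi^4).
\]
The leading non-smooth term is $-i\xi|\xi|=-i\sgn(\xi)\xi^2$, which is only $C^1$ at $\xi=0$. Inserting the Taylor expansion of $\hat V$ (smooth at $0$ to first order since $V\in\Im_{s,2}$, hence $\hat V\in H^2_\xi$) produces, near $\xi=0$,
\[
\hat w(\xi)=-B(\xi)\hat V(\xi)=-i\gamma\,\xi+i\gamma\,\xi|\xi|+(\text{smoother corrections}),
\]
so the cusp coefficient of $\hat w$ at $\xi|\xi|$ is exactly $i\gamma$. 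On the other hand, $u(t_j)\in\Im_{s,5/2}$ translates by Plancherel (Riesz fractional derivative in $\xi$) to $\hat w\in H^{5/2}_\xi$, and the sharp Sobolev fact that $\xi|\xi|\,\chi(\xi)\in H^r_{\mathrm{loc}}$ iff $r<5/2$ excludes any nonzero $\xi|\xi|$-coefficient at $0$ in $H^{5/2}_\xi$. Therefore $\gamma=0$, so $c_0=0$ and $u\equiv 0$ on $[0,t_2]$; uniqueness from Theorem~\ref{Main1} (restarted at $t_2$ with trivial data) then extends $u\equiv 0$ to all of $[0,T]$.

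The main obstacle is the clean bookkeeping of the low-frequency expansion of $-B(\xi)\hat V(\xi)$: one must verify rigorously that the ``smoother corrections''—in particular, the contributions coming from the further cusps of $\hat V$ that are allowed by $V\in\Im_{s,2}$ but not by $\Im_{s,5/2}$—really do lie in $H^{5/2}_\xi$, so that the $\xi|\xi|$-coefficient can be read off unambiguously. The other technical point is to translate the weighted hypothesis $u(t_j)\in\Im_{s,5/2}$ into the sharp local statement $\hat w\in H^{5/2}_\xi$ excluding the $\xi|\xi|$ cusp. Once these two points are in place, the contradiction $\gamma>0$ vs.\ $\gamma=0$ is immediate.
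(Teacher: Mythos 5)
Your overall strategy is the right one and is, in spirit, the paper's: the obstruction to $5/2$-decay is a $\sgn(\xi)$ jump in $\partial_\xi^2\hat u$ at $\xi=0$ (your $\xi|\xi|$ cusp), its coefficient is $2t_2\widehat\varphi(0)+\int_0^{t_2}\|u(\tau)\|_{L^2}^2\,d\tau$ (twice your $\gamma$), Stein's characterization of $D^{1/2}$ forces this coefficient to vanish, and the sign hypothesis $\int\varphi\,dx\ge 0$ then kills both summands; the propagation of $u\equiv 0$ from $[0,t_2]$ to $[0,T]$ by uniqueness is also fine. The problem is that the step you defer as ``clean bookkeeping'' is a genuine gap, located exactly where your decomposition departs from the paper's. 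Writing $\hat w=-B(\xi)\hat V(\xi)$ with $V=\int_0^{t_2}(u+\tfrac12 u^2)\,d\tau$ and taking two $\xi$-derivatives produces, besides the cusp $\partial_\xi^2\bigl(B(\xi)\bigr)\hat V(0)\sim \sgn(\xi)\hat V(0)$, the term
\[
\frac{i\xi}{1+|\xi|}\,\partial_\xi^2\hat V(\xi),\qquad\text{which contains}\qquad \frac{i\xi}{1+|\xi|}\int_0^{t_2}\partial_\xi^2\hat u(\tau,\xi)\,d\tau .
\]
To absorb this into the ``smoother corrections'' you must show that its $D_\xi^{1/2}$ lies in $L^2$. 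But $u(\tau)\in\Im_{s,2}$ only gives $\partial_\xi^2\hat u(\tau,\cdot)\in L^2$, and multiplication by the Lipschitz symbol $\xi/(1+|\xi|)$ does not map $L^2$ into $H^{1/2}$ (its vanishing at $\xi=0$ does not help: test with $g\in L^2$ supported in $[\epsilon,2\epsilon]$). On the physical side the requirement is $|x|^{1/2}\partial_x(1+\mathcal H\partial_x)^{-1}\bigl(x^2u(\tau)\bigr)\in L^2$, which would essentially need $|x|^{5/2}u(\tau)\in L^2$ at intermediate times $\tau$ --- precisely the property the theorem asserts fails; note also that the weight $|x|$ sits at the endpoint excluded from $A_2$, so the weighted Hilbert-transform bound that rescues the analogous terms for $r<5/2$ is unavailable here.

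The paper avoids this by never integrating the equation directly in time: it works with Duhamel's formula, so the two $\xi$-derivatives fall either on $\hat\varphi$ (controlled by the hypothesis $\varphi=u(t_1)\in\Im_{s,5/2}$), on the group $F(t,\xi)$ (producing only $(1+|\xi|)^{-k}$ factors and the $\sgn(\xi)$ singularity that becomes the cusp), or on $\frac{i\xi}{1+|\xi|}\widehat{u^2}$, where the quadratic nonlinearity gains decay ($u\in H^s\cap L^2_2$ implies $u^2\in L^2_{5/2}$ by interpolation), after which Proposition \ref{propcomm} applies. Your linear-in-$u$ contribution $\int_0^{t_2}u\,d\tau$ enjoys neither mechanism: it is not evaluated at a time where extra decay is assumed, and it is not quadratic. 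To close the argument you would have to re-expand $\int_0^{t_2}\hat u(\tau,\xi)\,d\tau$ through the group, i.e., revert to the Duhamel decomposition of the paper.
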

\begin {remarks} \hskip10pt
\begin{enumerate}
\item[a)] Theorem \ref{Main1} refers  to integer value powers  of the weights whilst Theorem \ref{Main2} deals with general real value powers of them. We choose to present this way our results since in the latter case the proof is a bit  more involved and relies in some continuity properties of the Hilbert transform. We remark that for the BO equation similar results to Theorem \ref{Main1} and Theorem \ref{Main3} were first established by Iorio in \cite{Io1} and \cite{Io2}, and those in Theorem \ref{Main2} and Theorem \ref{Main3} were recently obtained by Fonseca, Ponce and Linares in \cite{FoPo} and  \cite{GFFLGP1} and more generally for the dispersion generalized Benjamin-Ono equation by the latter authors in \cite{GFFLGP2}. 
\item[b)] Theorem \ref{Main3} establishes that the initial value problem is ill-posed whenever the decay rate $r$ of the initial data is or goes beyond $\frac52.$ In particular, the result in Theorem  \ref{Main2} is sharp. Also, we notice that this unique continuation result requires information at only two different times in a similar way to what happens in the case of the generalized KdV and non-linear Scr\"odinger equations, see \cite{EKPV1} and \cite{EKPV2}. For the BO and dispersion generalized BO equations, information at three different times is indeed required, see \cite{GFFLGP1} and \cite{GFFLGP2}.
\item[c)] The proof of Theorem \ref{Main3} implies that if $\widehat{\varphi}(0)=\int_\infty^\infty \,\, \varphi (x)\,dx<0,$ then the $L^2$-norm of the solution is conserved and $\|u(t)\|^2_0=\|\varphi\|_0^2=-2\widehat{\varphi}(0)$, and therefore under more general hypothesis on the initial data $\varphi,$ ill-posedness for the I.V.P \ref{rBO} holds.
     
\end{enumerate}
\end{remarks}

\section{Preliminary results}

\subsection{Wellposedness in $H^s(\R)$, $s> 1/2$}


The equation \eqref{rBO} is equivalent to the integral equation
\begin{equation}\label{IE}
 u(t)=e^{tA}\varphi+\int_0^t e^{(t-\tau)A}f(u(\tau))d\tau,
\end{equation}
where, $A =-\partial_x(1+\mathcal{H}\partial_x)^{-1}$ and $f(u)=-\dfrac{1}{2} \partial_x (1+\mathcal{H}\partial_x)^{-1} (u^2).$

\begin{theorem}\label{theorem1}  
 The Cauchy problem \ref{rBO} is l.w.p. in $H^s(\R)$, $s> 1/2.$
\end{theorem}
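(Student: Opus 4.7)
The plan is to apply the classical contraction mapping principle to the integral equation \eqref{IE} in the space $C([0,T];H^s(\R))$, exploiting the fact that both the linear evolution $A$ and the nonlinear part $f$ involve the smoothing multiplier $(1+\mathcal{H}\partial_x)^{-1}$ whose Fourier symbol $(1+|\xi|)^{-1}$ kills precisely one derivative.

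First I would compute Fourier symbols. Since $\widehat{\mathcal{H}\partial_x f}(\xi) = |\xi|\widehat f(\xi)$, the operator $1+\mathcal{H}\partial_x$ has symbol $1+|\xi|$, so $A$ has symbol $-i\xi/(1+|\xi|)$, which is purely imaginary and uniformly bounded by $1$ in modulus. Consequently $e^{tA}$ has Fourier symbol $\exp(-it\xi/(1+|\xi|))$ of modulus $1$, so $\{e^{tA}\}_{t\in\R}$ is a unitary group on every $H^s(\R)$ with $\|e^{tA}\varphi\|_{H^s}=\|\varphi\|_{H^s}$ for all $t$. Similarly, $f=-\tfrac{1}{2}\partial_x(1+\mathcal{H}\partial_x)^{-1}$ acting on $u^2$ has symbol of modulus bounded by $1/2$, so
\begin{equation*}
\|f(u)\|_{H^s}\le \tfrac{1}{2}\|u^2\|_{H^s}.
\end{equation*}

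The key nonlinear input is the Banach algebra property: for $s>1/2$, $\|u^2\|_{H^s}\le C_s\|u\|_{H^s}^2$, and more generally $\|u^2-v^2\|_{H^s}\le C_s(\|u\|_{H^s}+\|v\|_{H^s})\|u-v\|_{H^s}$. Combining this with the unitarity of $e^{tA}$, the map
\begin{equation*}
\Phi(u)(t)=e^{tA}\varphi+\int_0^t e^{(t-\tau)A}f(u(\tau))\,d\tau
\end{equation*}
satisfies, on the ball $B_R=\{u\in C([0,T];H^s):\sup_{t}\|u(t)\|_{H^s}\le R\}$ with $R=2\|\varphi\|_{H^s}$,
\begin{equation*}
\sup_{t\in[0,T]}\|\Phi(u)(t)\|_{H^s}\le \|\varphi\|_{H^s}+\tfrac{C_s}{2}TR^2,\qquad
\sup_{t\in[0,T]}\|\Phi(u)-\Phi(v)\|_{H^s}\le C_s TR\,\sup_t\|u-v\|_{H^s}.
\end{equation*}
Choosing $T=T(\|\varphi\|_{H^s})>0$ small enough so that $C_s T R<1/2$ and $\tfrac{C_s}{2}TR^2\le \|\varphi\|_{H^s}$, $\Phi$ is a contraction on $B_R$ and yields the unique fixed point in $C([0,T];H^s)$. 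Uniqueness in the full space follows from Gronwall applied to the same difference estimate, and continuous dependence on the datum is immediate from the contraction estimates.

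I do not expect a serious obstacle here: the factor $(1+\mathcal{H}\partial_x)^{-1}$ removes the derivative appearing in both the dispersion and the nonlinearity, so the problem reduces to an ODE-type fixed point in $H^s$ once one has the algebra bound $\|uv\|_{H^s}\lesssim \|u\|_{H^s}\|v\|_{H^s}$. The only point requiring a bit of attention is justifying strong continuity $t\mapsto e^{tA}\varphi\in H^s$, which follows from dominated convergence applied to $\int(1+|\xi|^2)^s|e^{-it\xi/(1+|\xi|)}-1|^2|\widehat\varphi(\xi)|^2\,d\xi$ since the integrand is dominated by $4(1+|\xi|^2)^s|\widehat\varphi(\xi)|^2\in L^1$.
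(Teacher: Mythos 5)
Your proposal is correct and follows exactly the route the paper takes: the paper's proof is a one-line appeal to the integral equation \eqref{IE}, Banach's fixed point theorem, and the local Lipschitz property of $f$ coming from the Banach algebra structure of $H^s(\R)$ for $s>1/2$, which are precisely the ingredients you develop. Your write-up simply supplies the details (unitarity of $e^{tA}$ from the purely imaginary bounded symbol $-i\xi/(1+|\xi|)$, the contraction estimates on the ball $B_R$, and strong continuity of the group) that the paper leaves implicit.
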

\begin{proof} This result is a consequence from \ref{IE}, Banach's fixed point Theorem and that $f$ is locally Lipchitz, for $s>1/2$, because $H^s$ is a Banach Algebra for $s>1/2$.
\end{proof}

\begin{lemma}\label{CC}  
Suposse that $\varphi \in H^s$ for $s>\frac{1}{2}$ and let $u \in C([-T,T] , H^s(\R))$ be the solution of (\ref{rBO}),  then $$\Vert u(t) \Vert_{\frac{1}{2}}\sim\Vert \varphi \Vert_{\frac{1}{2}}.$$
\end{lemma}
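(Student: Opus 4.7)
The plan is to exhibit a conserved energy that is equivalent to the $H^{1/2}$ norm. Observe that $\mathcal{H}\partial_x$ is the Fourier multiplier with symbol $-i\sgn(\xi)\cdot i\xi = |\xi|$, i.e.\ $\mathcal{H}\partial_x=|D|$. Hence $\mathcal H\partial^2_{xt}u=|D|\partial_t u$, and the rBO equation can be rewritten in the form
\[
(1+|D|)\partial_t u = -\partial_x u - \tfrac12\partial_x(u^2).
\]

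My first step is to pair this equation with $u$ in $L^2$. The transport and nonlinear terms vanish, since
\[
\int u\,\partial_x u\,dx = \tfrac12\int \partial_x(u^2)\,dx = 0,
\qquad
\int u^2\,\partial_x u\,dx = \tfrac13\int \partial_x(u^3)\,dx = 0,
\]
while the left-hand side splits via the self-adjointness and positivity of $|D|^{1/2}$:
\[
\int u\,\partial_t u\,dx + \int u\,|D|\partial_t u\,dx
= \tfrac12\tfrac{d}{dt}\bigl(\|u\|_{L^2}^2 + \||D|^{1/2}u\|_{L^2}^2\bigr).
\]
Defining the energy
\[
E(u)\;:=\;\|u\|_{L^2}^2 + \||D|^{1/2}u\|_{L^2}^2 \;=\; \int_{\R}(1+|\xi|)\,|\widehat u(\xi)|^2\,d\xi,
\]
this computation gives $\frac{d}{dt}E(u(t))=0$, so $E(u(t))=E(\varphi)$ for all $t\in[-T,T]$.

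The second step is the elementary pointwise comparison $(1+|\xi|^2)^{1/2}\le 1+|\xi|\le\sqrt 2\,(1+|\xi|^2)^{1/2}$, which yields $E(u)\sim \|u\|_{1/2}^2$. Combining with the conservation of $E$ gives
\[
\|u(t)\|_{1/2}^2 \;\sim\; E(u(t)) \;=\; E(\varphi) \;\sim\; \|\varphi\|_{1/2}^2,
\]
which is the claimed equivalence.

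The only point requiring care is the rigorous justification of the energy identity for an $H^s$ solution rather than a smooth one. The hypothesis $s>1/2$ makes $H^s$ a Banach algebra, so $u^2\in H^s$ and $\partial_x(u^2)\in H^{s-1}$, and the duality pairings above all make sense. One may either verify the identity directly in Fourier variables by multiplying the equation $(1+|\xi|)\partial_t\widehat u = -i\xi\widehat u - \tfrac{i\xi}{2}\widehat{u^2}$ by $\overline{\widehat u}$, taking real parts (the purely imaginary term $i\xi|\widehat u|^2$ drops out, and the nonlinear term vanishes by Plancherel and the calculation above), or approximate by a sequence of regularized initial data and pass to the limit using the local well-posedness of Theorem \ref{theorem1}. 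This technical verification is the only genuine obstacle, and both routes are standard once the algebraic cancellations above are in place.
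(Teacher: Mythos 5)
Your proof is correct and follows essentially the same route as the paper: the conserved quantity $\int(1+|\xi|)|\widehat u(\xi)|^2\,d\xi$ is exactly the paper's $\langle \tilde J^{1/2}u,\tilde J^{1/2}u\rangle$ with $\tilde J=(1+\mathcal H\partial_x)$, and the cancellation of the transport and nonlinear terms after pairing with $u$ is the same computation. Your explicit remarks on the symbol $\mathcal H\partial_x=|D|$ and on justifying the identity for $H^s$ solutions are welcome but do not change the argument.
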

\begin{proof} 
The equation (\ref{rBO}) implies that

\begin{equation}
(1+\mathcal{H}\partial_x)u_t= - \partial_xu-\frac{1}{2}\partial_xu^2.
\end{equation}

Now 
$$\left\Vert u\right\Vert _{\frac{1}{2}}^{2}\sim\left\langle \tilde{J}^{\frac{1}{2}}u\ ,\ \tilde{J}^{\frac{1}{2}}u\right\rangle,$$ 
 
where, $\tilde{J}^{s}=(1+\mathcal H\partial_{x})^{s}$.\\

Therefore, it easily follows  that

\begin{align} \frac{d}{dt}\left\langle \tilde{J}^{\frac{1}{2}}u\ ,\ \tilde{J}^{\frac{1}{2}}u\right\rangle \nonumber
 & =2\left\langle (1+\mathcal{H}\partial_{x})^{\frac{1}{2}}u\ ,\ (1+\mathcal{H}\partial_{x})^{\frac{1}{2}}\big(-\partial_{x}(1+\mathcal{H}\partial_{x})^{-1}u-\frac{1}{2}\partial_{x}(1+\mathcal{H}\partial_{x})^{-1}u^{2}\big)\right\rangle \nonumber\\
 & =2\left\langle (1+\mathcal{H}\partial_{x})u\ ,\ -\frac{\partial_{x}u}{(1+\mathcal{H}\partial_{x})}-\frac{\partial_{x}u^{2}}{2(1+\mathcal{H}\partial_{x})}\right\rangle \nonumber \\
 & =2\left\langle u\ ,\ -\partial_{x}u-\frac{1}{2}\partial_{x}u^{2}\right\rangle \nonumber\\
 & =-2\left\langle u\ ,\ \partial_{x}u\right\rangle -\left\langle u\ ,\ \partial_{x}u^{2}\right\rangle \label{conservada}\\
 & =0. \nonumber
\end{align}
This implies the result.
\end{proof}
 Next theorem in \cite{An} shows that the I.V.P \ref{rBO} is globally well-posed in $H^s(\R), s>1/2$.  Since the proof in \cite{An} was only carried out in detail for the periodic case, we find interesting to include its proof in the continous setting. The key point is to obtain {\it a priori} estimates of the Sobolev norm in $H^s(\R), s>1/2$ with the help of the {\it a priori} bound of the $H^{\frac{1}{2}}$ norm in Lemma \ref{CC} and the help of the Kato-Ponce commutator, \cite{KaPo},

\begin{equation*}
|fg|_{s,p}\leq c(\|f\|_{L^\infty} |g|_{s,p}+\|g\|_{L^\infty} |f|_{s,p})\,\,\qquad{\text for} \qquad s>0, \,\,\,1<p<\infty,
\end{equation*}
and the Brezis-Gallouet's inequality, \cite{BrGa},  which in dimension one reads
 
\begin{equation*}
\|f\|_{L^\infty}\leq c(1+\sqrt{log(1+\|f\|_s)}\|f\|_{\frac12}), \,\,{\text for}\,\,s>1/2.
\end{equation*}
 
This last inequality was also used in the same spirit by Ponce to show global-well-posedness of the Benjamin-Ono equation in $H^{s}(\R), s\geq \frac{3}{2},$ see \cite{Po}.

\begin{theorem}\label{theorem2}  
 The Cauchy problem \ref{rBO} is g.w.p. in $H^s(\R)$, $s> 1/2.$
\end{theorem}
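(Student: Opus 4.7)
My plan is to combine the local theory of Theorem \ref{theorem1} with an \emph{a priori} bound on $\|u(t)\|_s$ that rules out finite time blow-up. By Theorem \ref{theorem1} the IVP has a unique local solution $u\in C([0,T^*);H^s)$ on a maximal interval $[0,T^*)$, and the standard blow-up criterion says $T^*<\infty$ forces $\limsup_{t\uparrow T^*}\|u(t)\|_s=\infty$. So it suffices to show that $\|u(t)\|_s$ cannot blow up in finite time.

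First I would differentiate $\|u(t)\|_s^2=\langle J^su,J^su\rangle$ in time, using the equivalent form of \eqref{rBO}, $u_t=-B u-\tfrac12 B(u^2)$ with $B=\partial_x(1+\mathcal{H}\partial_x)^{-1}$. The symbol of $B$ is $i\xi/(1+|\xi|)$, which is purely imaginary and bounded, so $B$ is a bounded skew-adjoint Fourier multiplier on every $H^\sigma$, and commutes with $J^s$. Hence
\begin{equation*}
\tfrac{1}{2}\tfrac{d}{dt}\|u\|_s^2=-\langle J^su,BJ^su\rangle-\tfrac{1}{2}\langle J^su,BJ^s(u^2)\rangle=-\tfrac{1}{2}\langle J^su,BJ^s(u^2)\rangle,
\end{equation*}
the first pairing vanishing by skew-adjointness of $B$. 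Using $\|B\|_{L^2\to L^2}\le 1$ and the Kato--Ponce (Moser-type) estimate with $s>1/2$,
\begin{equation*}
\bigl|\tfrac{d}{dt}\|u\|_s^2\bigr|\le \|u\|_s\,\|u^2\|_s\le c\,\|u\|_{L^\infty}\,\|u\|_s^2 .
\end{equation*}

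The next step is to control $\|u\|_{L^\infty}$ logarithmically. From Lemma \ref{CC} we have a \emph{time-independent} bound $\|u(t)\|_{1/2}\le C_\varphi$. Inserting this into the Brezis--Gallouet inequality gives, for $s>1/2$,
\begin{equation*}
\|u(t)\|_{L^\infty}\le c\bigl(1+C_\varphi\sqrt{\log(1+\|u(t)\|_s)}\bigr).
\end{equation*}
Combining the two estimates and setting $y(t)=1+\|u(t)\|_s^2$, I obtain a differential inequality of the form
\begin{equation*}
y'(t)\le K\,y(t)\,\sqrt{1+\log y(t)},
\end{equation*}
with $K=K(\|\varphi\|_{1/2})$. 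Separating variables, $\frac{d}{dt}\sqrt{1+\log y(t)}\le K/2$, so
\begin{equation*}
\sqrt{1+\log y(t)}\le\sqrt{1+\log y(0)}+\tfrac{K}{2}\,t,
\end{equation*}
which gives a double-exponential but finite bound on $\|u(t)\|_s$ on every bounded interval. This precludes blow-up, so $T^*=\infty$ and the solution is global.

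The main obstacle I expect is the energy identity itself: one must legitimize the skew-adjointness cancellation for the rough regularity $s>1/2$, which I would handle by running the computation on a smooth approximating sequence $\varphi_n\in H^\infty$ (solutions exist globally and smoothly by Theorem \ref{theorem1} applied iteratively, or by standard persistence), deriving the inequality for $y_n$, and passing to the limit using continuous dependence. Everything else is bookkeeping: the crucial nontrivial inputs are (i) the conservation of the $H^{1/2}$ norm from Lemma \ref{CC}, (ii) the Kato--Ponce product estimate, and (iii) the logarithmic Brezis--Gallouet inequality, exactly as in Ponce's global-in-time argument for the Benjamin--Ono equation in $H^{3/2}$.
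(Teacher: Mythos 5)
Your argument is correct and rests on exactly the same three pillars as the paper's proof: the time-independent $H^{1/2}$ bound of Lemma \ref{CC}, the Kato--Ponce product estimate, and the Brezis--Gallouet inequality, all fed into a logarithmic Gronwall argument that precludes blow-up of $\|u(t)\|_s$. The implementation differs in two respects. First, the paper never differentiates $\|u\|_s^2$: it estimates the Duhamel formula \eqref{IE} directly, using that $e^{tA}$ is unitary on $H^s$ and that the nonlinearity carries the smoothing factor $\partial_x(1+\mathcal H\partial_x)^{-1}$, to get $\|u(t)\|_s\le\|\varphi\|_s+C_0\int_0^t(1+\sqrt{\log(1+\|u\|_s)})\|u\|_s\,d\tau$; you instead derive the differential identity $\frac{d}{dt}\|u\|_s^2=-\langle J^su,BJ^s(u^2)\rangle$ from the skew-adjointness of $B=\partial_x(1+\mathcal H\partial_x)^{-1}$ (symbol $i\xi/(1+|\xi|)$, purely imaginary and bounded), which is equally valid --- and the approximation argument you flag as a potential obstacle is in fact unnecessary, since $A$ is bounded on $H^s$ and $H^s$ is an algebra, so the equation is a genuine ODE in $H^s$ and $u\in C^1([0,T];H^s)$ outright. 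Second, the closure of the Gronwall step differs: the paper discards information via $\sqrt{\log(1+\Psi)}\le 1+\log(1+\Psi)$ and integrates $\frac{d}{dt}\log(1+\log(1+\Psi))\le C_1$, arriving at the double-exponential bound $\|u(t)\|_s\le e^{C_2e^{C_3t}}$, whereas you keep the square root and separate variables to get $\sqrt{1+\log y(t)}\le\sqrt{1+\log y(0)}+\tfrac K2 t$, i.e.\ $\|u(t)\|_s\le e^{C(1+t)^2}$. This is quantitatively sharper than the paper's bound (it is a single exponential of $t^2$, not a double exponential, so your own description of it undersells it), and it matches the growth rate Ponce obtained for BO in $H^{3/2}$. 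Either route gives finiteness of $\|u(t)\|_s$ on bounded intervals, which is all that global well-posedness requires.
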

\begin{proof}

Let $\varphi \in H^{s}(\mathbb{R})$, the integral equation
\begin{equation}\label{solu}
 u(t)=e^{tA}\varphi+\int_0^t e^{(t-\tau)A}f(u(\tau))d\tau,
\end{equation}
implies that

\begin{align}
\Vert u(t) \Vert_{s} \leq & \Vert \varphi\Vert_{s} +  \dfrac{1}{2}\int_0^t \Vert u(\tau) \Vert_{\infty} \Vert u(\tau) \Vert_{s} d\tau \nonumber\\
&\leq \Vert \varphi\Vert_{s} + C_0\int_0^t \Big (1+\sqrt{\log(1+\Vert u(\tau) \Vert_{s})}\Big) \Vert u(\tau) \Vert_{s}  d\tau =:\Psi(t),\label{global}
\end{align}

where $C_0$ depends only $\Vert \varphi \Vert_{\frac{1}{2}}$.\\

The lemma above and this inequality imply that
\begin{align*}
\Psi '(t) &= C_0\Big (1+\sqrt{\log(1+\Vert u(t) \Vert_{s})}\Big)\Vert u(t) \Vert_{s}\\
&\leq\ C_0\Big (1+\sqrt{\log(1+ \Psi(t))}\Big)\Psi (t)\\
&\leq C_0\Big (1+\log(1+ \Psi(t))\Big)\Psi (t).
\end{align*}
Then, there exists  $C_1>0$ such that, 

\begin{equation*}
\frac{d}{dt}\log(1+\log(1+\Psi(t)))\leq C_1,
\end{equation*}
and  hence, there are constants  $C_2 >0$ and $C_3 >0$ such that for every $t \in [-T , T]$, $\Vert u(t)\Vert_{s} \leq e^{C_{2}\large{e}^{C_3t}}$. 

\end{proof}

\subsection{Group estimates in weighted spaces}

\vskip .1in

In order to obtain the group estimates, next lemma provides some formulae for derivatives of the unitary group associated to the rBO equation in Fourier space. They  easily follow from a direct computation.  

\begin{lemma}\label{derivatives} 
Let $F(t,\xi)= \LARGE{e}^{b(\xi)t}$, where $b(\xi)=\dfrac{-i\xi}{1+|\xi|}.$ Then,
\begin{align}\partial_{\xi}F(t,\xi)  =&(-it)(1+|\xi|)^{-2}F(t,\xi)\\
\partial_{\xi}^{2}F(t,\xi) =&2it \sgn(\xi)(1+|\xi|)^{-3}F(t,\xi)+(-it)^{2}(1+|\xi|)^{-4}F(t,\xi)\label{der1}\\
\partial_{\xi}^{3}F(t,\xi)  =&4it\delta-6it(1+|\xi|)^{-4}F(t,\xi)-2(-it)^{2}[\sgn(\xi)+2](1+|\xi|)^{-5}F(t,\xi) \label{tercera}\nonumber\\
&+ (-it)^{3}(1+|\xi|)^{-6}F(t,\xi)\\
\partial_{\xi}^{4}F(t,\xi)  =& 4it\delta'+4t^{2}\delta+24it \sgn(\xi)(1+|\xi|)^{-5}F(t,\xi) \nonumber \\
&+4(-it)^{2}[4+5\sgn(\xi)](1+|\xi|)^{-6}F(t,\xi) \\
&-2(-it)^{3}[\sgn(\xi)+5](1+|\xi|)^{-7}F(t,\xi)+(-it)^{4}(1+|\xi|)^{-8}F(t,\xi)\nonumber \\
\partial_{\xi}^{5}F(t,\xi)  =& 4it\delta''+4t^{2}\delta'+(48it-40t^{2}-4it^{3})\delta-120it(1+|\xi|)^{-6}F(t,\xi) \nonumber \\ 
&+(it)^{2}[-120\sgn(\xi)-120](1+|\xi|)^{-7}F(t,\xi) \nonumber\\
&+(it)^{3}[-90\sgn(\xi)-30](1+|\xi|)^{-8}F(t,\xi) \label{der4} \\
 & +(it)^{4}[-10\sgn(\xi)-10](1+|\xi|)^{-9}F(t,\xi)+(-it)^{5}(1+|\xi|)^{-10}F(t,\xi).\nonumber 
\end{align}
Moreover, for $j\geq 5$ the j-derivative for $F(t,\xi), F_j(t,\xi)$, has the form:\\
 
\begin{align}
 F_j(t,\xi)=\partial_{\xi}^jF(t,\xi)=& 4it\delta^{(j-3)} + 4t^2\delta^{(j-4)} + \sum_{k=0}^{j-5}p_k(t)\delta^{k} 
 + (-it)j!(1+|\xi|)^{-j-1}F(t,\xi) \nonumber \\ 
 &  +  \sum_{k=2}^{j-1}(it)^{k}(1+|\xi)^{-j-k}[a_k \sgn(\xi)-b_k] + (-it)^{j}(1+ |\xi|)^{-2j}F(t, \xi),\label{dergen}
 \end{align}
where $\delta$ is the Dirac distribution, $p_k(t)$,  is a polynomial, $a_k$ y $b_k$ are constants that depend on $k.$
\end{lemma}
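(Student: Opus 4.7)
The plan is to prove all the formulae by iterated application of the product and chain rules, using three basic differentiation identities: away from $\xi=0$ one has $\partial_\xi(1+|\xi|)^{-k} = -k(1+|\xi|)^{-k-1}\sgn(\xi)$; distributionally $\partial_\xi \sgn(\xi) = 2\delta$; and the exponential's own derivative is $\partial_\xi F(t,\xi) = tb'(\xi)F(t,\xi)$. Since $b(\xi) = -i\xi(1+|\xi|)^{-1}$, a direct computation for $\xi\neq 0$ (splitting the two signs of $\xi$) yields the clean identity $b'(\xi) = -i(1+|\xi|)^{-2}$. This establishes the first formula and supplies the one-step recursion used throughout.

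For the formulae up to $\partial_\xi^5 F$, I would simply differentiate each preceding expression term by term. The key algebraic simplifications are $\sgn^2(\xi)=1$ almost everywhere, together with $F(t,0)=1$ and $(1+|\xi|)^{-k}|_{\xi=0}=1$, which collapse products of the form $\delta(\xi)(1+|\xi|)^{-k}F$ down to $\delta(\xi)$ and produce the explicit $\delta$-coefficients. Each application of $\partial_\xi$ to a smooth block $t^m(1+|\xi|)^{-\ell}F$ generates both a $\sgn$-term from rule (i) and a descendant of the same form with $\ell \mapsto \ell+2$ via rule (iii); each time a $\sgn$ factor is differentiated via rule (ii), a Dirac mass is released which, after absorbing its smooth companion, contributes to the appropriate $\delta^{(k)}$ term.

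For the general formula (\ref{dergen}) with $j\geq 5$, the proof is induction on $j$. The inductive step differentiates each of the four structural blocks: (a) the two leading distributional terms $4it\delta^{(j-3)}+4t^2\delta^{(j-4)}$ translate to $4it\delta^{(j-2)}+4t^2\delta^{(j-3)}$; (b) the sum $\sum_{k=0}^{j-5}p_k(t)\delta^k$ both translates up in derivative order and absorbs any fresh Dirac contributions generated at this stage; (c) the isolated term $(-it)j!(1+|\xi|)^{-j-1}F$ produces the next top term $(-it)(j+1)!(1+|\xi|)^{-j-2}F$ together with a $\sgn$-contribution absorbed into the next sum; (d) the $\sgn$-type terms $(it)^k(1+|\xi|)^{-j-k}[a_k\sgn(\xi)-b_k]$ evolve analogously, each releasing a $\delta$-contribution that enters block (b), while the top polynomial term $(-it)^j(1+|\xi|)^{-2j}F$ advances to $(-it)^{j+1}(1+|\xi|)^{-2(j+1)}F$ plus a $\sgn$-term of one lower order.

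The main obstacle is the bookkeeping, and in particular verifying that no new contribution arises to the two highest distributional orders $\delta^{(j-3)}$ and $\delta^{(j-4)}$ at any stage, so that the coefficients $4it$ and $4t^2$ remain stable. This holds because a Dirac mass newly created at stage $j$ always enters at distributional order $\delta^{(0)}$ relative to that stage, and therefore cannot reach the two top orders, which are fixed once and for all by the computations at stages $3$ and $4$. The remaining coefficients $p_k(t)$, $a_k$ and $b_k$ are then determined by linear recurrences coming from the rules above, but their precise values play no role in the applications elsewhere in the paper.
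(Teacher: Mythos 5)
Your proposal is correct and is essentially the paper's own argument: the paper offers no proof beyond the remark that the formulae ``easily follow from a direct computation,'' and your plan --- iterating the product rule with $b'(\xi)=-i(1+|\xi|)^{-2}$ and $\partial_\xi\sgn(\xi)=2\delta$, evaluating the smooth companions of each Dirac mass at $\xi=0$, and running an induction for $j\ge 5$ in which newly created Dirac masses always enter at order zero and hence cannot disturb the coefficients $4it$ and $4t^2$ of the two top distributional terms --- is exactly that computation carried out. As a minor point, performing your term-by-term differentiation for $\partial_{\xi}^{3}F$ yields $6t^{2}\sgn(\xi)(1+|\xi|)^{-5}F$ where \eqref{tercera} records $-2(-it)^{2}[\sgn(\xi)+2](1+|\xi|)^{-5}F$ (the two agree only for $\xi>0$), a sign slip in the stated lemma that is harmless for the subsequent $L^{2}$ estimates but which your computation would detect.
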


\begin{theorem} \label{pesosgrupo}
If  $E(t)=e^{tA}$, for $s\in \mathbb{R}$, $r\in \mathbb{N}$ $s\geq r$, we have:

\begin{enumerate}
\item[1.] If $r=0,1, 2$, 
\begin{equation}\label{pc}
 \Vert E(t)\varphi\Vert_{\Im_{s,r}} \leq P_r(t) \Vert \varphi \Vert _{\Im_{s,r}}
 \end{equation} 
 where $P_r(t)$ is a polynomial of grade $r$ with positive coeficients.
\item [2.]If $r\geq 3 $ and $\varphi \in \Im_{s,r}$, \ \ $E\in C([0,\infty) ; \Im_{s,r} )$, if and only if, 
\begin{equation}
(\partial^j_{\xi}\widehat{\varphi})(0)=0, \ \ \ \ \  j= 0,1,2.... r-3.
\end{equation}
In this case, a similar estimate to (\ref{pc}) holds.
\end{enumerate}
\end{theorem}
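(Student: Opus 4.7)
The plan is to work on the Fourier side. Since $|F(t,\xi)|=1$, the multiplier $E(t)$ is unitary on $H^s$ for every $s\in\R$, so only the weighted $L^2_r$ part of the $\Im_{s,r}$ norm needs controlling. For integer $r$, Plancherel identifies $\||x|^r E(t)\varphi\|_{L^2}$ with $\|\partial_\xi^r(F(t,\cdot)\widehat{\varphi})\|_{L^2}$, so the whole question reduces to analyzing this distributional derivative via the explicit formulas of Lemma \ref{derivatives} and the (distributional) Leibniz rule.

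For part (1), when $r\in\{0,1,2\}$, every $\partial_\xi^k F$ with $k\leq 2$ is a genuine function bounded uniformly in $\xi$ by a polynomial in $t$ of degree $k$. The ordinary Leibniz formula
$$\partial_\xi^r(F\widehat{\varphi})=\sum_{k=0}^r\binom{r}{k}(\partial_\xi^k F)(\partial_\xi^{r-k}\widehat{\varphi}),$$
combined with $\|\partial_\xi^{r-k}\widehat{\varphi}\|_{L^2}=\|x^{r-k}\varphi\|_{L^2}\leq\|\varphi\|_{\Im_{s,r}}$, immediately produces the polynomial bound \eqref{pc}.

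For part (2), when $r\geq 3$, the formulas of Lemma \ref{derivatives} show that $\partial_\xi^k F$ carries a top Dirac contribution $4it\delta^{(k-3)}$, plus lower-order Dirac derivatives with polynomial-in-$t$ coefficients, on top of a smooth remainder. Distributional Leibniz therefore decomposes $\partial_\xi^r(F\widehat{\varphi})$ into a function part (bounded as in (1)) plus a finite singular expansion $\sum_{j=0}^{r-3}c_j(t)\delta^{(j)}$ concentrated at $0$. Invoking $\delta^{(m)}g=\sum_{\ell=0}^m\binom{m}{\ell}(-1)^\ell g^{(\ell)}(0)\delta^{(m-\ell)}$, valid because $\widehat{\varphi}\in H^r_\xi\hookrightarrow C^{r-1}$, each $c_j(t)$ unfolds as a combination of monomials in $t$ times the boundary values $\widehat{\varphi}^{(\ell)}(0)$, $\ell=0,\ldots,r-3$. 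Since Dirac masses are not in $L^2$, membership of $\partial_\xi^r(F\widehat{\varphi})$ in $L^2$ forces every $c_j(t)\equiv 0$; the top coefficient $c_{r-3}(t)$ comes only from $(4it\delta^{(r-3)})\widehat{\varphi}$ and equals a nonzero multiple of $\widehat{\varphi}(0)$, so $\widehat{\varphi}(0)=0$ is forced. Feeding this back, $c_{r-4}(t)$ reduces to a nonzero multiple of $\widehat{\varphi}'(0)$, and iterating downward yields $\widehat{\varphi}^{(j)}(0)=0$ for $j=0,\ldots,r-3$. Conversely, under these moment conditions the singular part cancels and the function part inherits the polynomial estimate from (1), with continuity in $t$ following from pointwise continuity of $F$ and dominated convergence. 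I expect the main technical obstacle to be the careful bookkeeping of the coefficients $c_j(t)$ via the general formula \eqref{dergen}, in order to verify the triangular dependence on $\{\widehat{\varphi}^{(\ell)}(0)\}$ with nonzero leading entries, which is what makes the iterative determination of the moment conditions succeed.
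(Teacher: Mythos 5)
Your proposal is correct and follows essentially the same route as the paper: Plancherel reduces everything to $\partial_\xi^r(F\widehat{\varphi})$, the explicit formulas of Lemma \ref{derivatives} plus Leibniz give the polynomial bounds for $r\le 2$, and for $r\ge 3$ the Dirac contributions must be annihilated by the vanishing moments $\widehat{\varphi}^{(j)}(0)=0$, $j=0,\dots,r-3$. If anything, your systematic unfolding of the singular coefficients $c_j(t)$ via $\delta^{(m)}g=\sum_\ell\binom{m}{\ell}(-1)^\ell g^{(\ell)}(0)\delta^{(m-\ell)}$ makes the induction step (which the paper only sketches) more explicit.
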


\begin{proof} We use Leibniz rule and  lemma \ref{derivatives} to conclude that
\begin{align*}
\Vert E(t)\varphi\Vert_{s,0}^{2}= & \Vert E(t)\varphi\Vert_{s}^{2}+\Vert E(t)\varphi\Vert_{L_{0}^{2}}^{2}\\
= & \Vert\varphi\Vert_{s}^{2}+\Vert\varphi\Vert_{0}^{2}.
\end{align*}

\newpage

For $r=1$, we obtain

\begin{align*}
\Vert E(t)\varphi\Vert_{s,1}^{2}= & \Vert E(t)\varphi\Vert_{s}^{2}+\Vert E(t)\varphi\Vert_{L_{1}^{2}}^{2}\\
= & \Vert\varphi\Vert_{s}^{2}+\int_{-\infty}^{\infty}x^{2}\vert E(t)\varphi\vert^{2}dx\\
= & \Vert\varphi\Vert_{s}^{2}+\int_{-\infty}^{\infty}\vert xE(t)\varphi\vert^{2}dx\\
= & \Vert\varphi\Vert_{s}^{2}+\int_{-\infty}^{\infty}\vert\partial_{\xi}(F(t,\xi)\widehat{\varphi})\vert^{2}d\xi\\
= & \Vert\varphi\Vert_{s}^{2}+\int_{-\infty}^{\infty}\Bigl\vert\frac{-it}{(1+\vert\xi\vert)^{2}}F(t,\xi)\widehat{\varphi}+F(t,\xi)\partial_{\xi}\widehat{\varphi}\Bigr\vert^{2}d\xi\\
\leq & \Vert\varphi\Vert_{s}^{2}+t^{2}\int_{-\infty}^{\infty}\vert\widehat{\varphi}\vert^{2}d\xi+\int_{-\infty}^{\infty}\vert\partial_{\xi}\widehat{\varphi}\vert^{2}d\xi\\
\leq & \Vert\varphi\Vert_{s}^{2}+t^{2}\Vert\varphi\Vert_{0}^{2}+\Vert x\varphi\Vert_{0}^{2}\\
\leq & \Vert\varphi\Vert_{s}^{2}+(1+t^{2})\Vert\varphi\Vert_{L_{1}^{2}}^{2}.
\end{align*}

For $r=2$, we have

\begin{align*}
\Vert E(t)\varphi\Vert_{s,2}^{2}= & \Vert E(t)\varphi\Vert_{s}^{2}+\Vert E(t)\varphi\Vert_{L_{2}^{2}}^{2}\\
\leq & \Vert\varphi\Vert_{s}^{2}+\int_{-\infty}^{\infty}\vert\partial_{\xi}^{2}(F(t,\xi)\widehat{\varphi})\vert^{2}d\xi\\
\leq & \Vert\varphi\Vert_{s}^{2}+\int_{-\infty}^{\infty}\Bigl\vert\Big(\frac{2it\sgn(\xi)}{(1+\vert\xi\vert)^{3}}F(t,\xi)-\frac{t^{2}}{(1+\vert\xi\vert)^{4}}F(t,\xi)\Big)\widehat{\varphi}\Bigr\vert^{2}d\xi\\
 & +\int_{-\infty}^{\infty}\Big\vert\frac{it}{(1+\vert\xi\vert)^{2}}F(t,\xi)\partial_{\xi}\widehat{\varphi}\Big\vert^{2}d\xi+\int_{-\infty}^{\infty}\vert F(t,\xi)\partial_{\xi}^{2}\widehat{\varphi}\vert^{2}d\xi\\
\leq & \Vert\varphi\Vert_{s}^{2}+(4t^{2}+t^{4})\int_{-\infty}^{\infty}\vert\widehat{\varphi}\vert^{2}d\xi+t^{2}\int_{-\infty}^{\infty}\vert\partial_{\xi}\widehat{\varphi}\vert^{2}d\xi+t^{2}\int_{-\infty}^{\infty}\vert\partial_{\xi}^{2}\widehat{\varphi}\vert^{2}d\xi\\
\leq & \Vert\varphi\Vert_{s}^{2}+(4t^{2}+t^{4})\Vert\varphi\Vert_{0}^{2}+t^{2}\Vert x\varphi\Vert_{0}^{2}+\Vert x^{2}\varphi\Vert_{0}^{2}.
\end{align*}
 
For $r=3$, we use that $t\delta \widehat{\varphi}=t\delta \widehat{\varphi}(0)=0$, and therefore

\begin{align*}
\Vert E(t)\varphi\Vert_{s,3}^{2}\leq & \Vert\varphi\Vert_{s}^{2}+\int_{-\infty}^{\infty}\vert\partial_{\xi}^{3}(F(t,\xi)\widehat{\varphi})\vert^{2}d\xi\\
\leq & \Vert\varphi\Vert_{s}^{2}+\int_{-\infty}^{\infty}\vert(\partial_{\xi}^{3}F(t,\xi))\widehat{\varphi}+3\partial_{\xi}^{2}F(t,\xi)\partial_{\xi}\widehat{\varphi}+3\partial_{\xi}F(t,\xi)\partial_{\xi}^{2}\widehat{\varphi}+F(t,\xi)\partial_{\xi}^{3}\widehat{\varphi}\vert^{2}d\xi\\
\leq & \Vert\varphi\Vert_{s}^{2}+\int_{-\infty}^{\infty}\Bigl\vert 4it\delta\widehat{\varphi}+\Big(-\frac{6it}{(1+\vert\xi\vert)^{4}}-\frac{2t^{2}[\sgn(\xi)+2]}{(1+\vert\xi\vert)^{5}}+\frac{(-it)^{3}}{(1+\vert\xi\vert)^{6}}\Big)F(t,\xi)\widehat{\varphi}\Bigr\vert^{2}d\xi\\
 & +9\int_{-\infty}^{\infty}\Bigl\vert\frac{2it\sgn(\xi)}{(1+\vert\xi\vert)^{3}}F(t,\xi)-\frac{t^{2}}{(1+\vert\xi\vert)^{4}}F(t,\xi)\Bigr\vert\vert\partial_{\xi}\widehat{\varphi}\vert^{2}d\xi\\
 & +9\int_{-\infty}^{\infty}\Bigl\vert\frac{-it}{(1+\vert\xi\vert)^{2}}F(t,\xi)\partial_{\xi}^{2}\widehat{\varphi}\Bigr\vert^{2}d\xi+\int_{-\infty}^{\infty}\vert F(t,\xi)\partial_{\xi}^{3}\widehat{\varphi}\vert^{2}d\xi\\
\leq & \Vert\varphi\Vert_{s}^{2}+(36t^{2}+36t^{4}+t^{6})\int_{-\infty}^{\infty}\vert\widehat{\varphi}\vert^{2}d\xi+(36t^{2}+9t^{4})\int_{-\infty}^{\infty}\vert\partial_{\xi}\widehat{\varphi}\vert^{2}d\xi\\
 & +9t^{2}\int_{-\infty}^{\infty}\vert\partial_{\xi}^{2}\widehat{\varphi}\vert^{2}d\xi+\int_{-\infty}^{\infty}\vert\partial_{\xi}^{3}\widehat{\varphi}\vert^{2}d\xi\\
\leq & \Vert\varphi\Vert_{s}^{2}+(36t^{2}+36t^{4}+t^{6})\Vert\varphi\Vert_{0}^{2}+(36t^{2}+9t^{4})\Vert x\varphi\Vert_{0}^{2}+9t^{2}\Vert x^{2}\varphi\Vert_{0}^{2}+\Vert x^{3}\varphi\Vert_{0}^{2}.
\end{align*}
 
Arguing with the induction principle, we obtain the desired result.  

\end{proof}

\subsection{Some operator bounds}

\begin{lemma} \label{operator}
The operator $A= \partial_x (1+\mathcal{H}\partial_x)^{-1} \in\mathfrak{B}(\Im_{s,2}),$ 
moreover $A(u^2)\in \Im_{s,2}$ for all $\varphi\in \Im_{s,2} .$

\end{lemma}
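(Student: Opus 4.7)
My plan is to pass everything to the Fourier side, where $A$ is pointwise multiplication by the symbol $m(\xi)=-i\xi/(1+|\xi|)$. Since $\|m\|_\infty\leq 1$, the $H^s$ bound $\|Af\|_s\leq \|f\|_s$ is immediate. The weighted estimate reduces, via Plancherel, to controlling $\|\partial_\xi^j(m\hat f)\|_{L^2}$ for $j=1,2$, so by Leibniz it is enough to verify that $m,\partial_\xi m,\partial_\xi^2 m\in L^\infty(\R)$ in the distributional sense.

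A direct differentiation — parallel to Lemma \ref{derivatives} but much simpler since $m$ is independent of $t$ — gives for $\xi\neq 0$
$$\partial_\xi m(\xi)=\frac{-i}{(1+|\xi|)^2},\qquad \partial_\xi^2 m(\xi)=\frac{2i\,\sgn(\xi)}{(1+|\xi|)^3}.$$
The crucial observation is the cancellation $\xi\,\sgn(\xi)=|\xi|$: it guarantees that, distributionally, $\partial_\xi m$ is continuous across $\xi=0$ and $\partial_\xi^2 m$ has only a jump discontinuity there, so neither derivative produces a Dirac mass. Both are bounded on $\R$. Leibniz and Plancherel then yield
$$\|x^2 Af\|_{L^2}\leq \|\partial_\xi^2 m\|_\infty\|\hat f\|_{L^2}+2\|\partial_\xi m\|_\infty\|\partial_\xi\hat f\|_{L^2}+\|m\|_\infty\|\partial_\xi^2\hat f\|_{L^2}\leq C\|f\|_{\Im_{0,2}},$$
and an analogous estimate for $\|xAf\|_{L^2}$. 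Combined with the Sobolev bound, this gives $A\in\mathfrak{B}(\Im_{s,2})$.

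For the second assertion, since $A$ is bounded on $\Im_{s,2}$ it suffices to show that $u\in\Im_{s,2}$ implies $u^2\in\Im_{s,2}$, under the standing assumption $s>1/2$. The bound $\|u^2\|_s\leq C\|u\|_s^2$ is the Banach algebra property of $H^s(\R)$ for $s>1/2$. For the weight, the pointwise inequality $|x^j u^2|\leq \|u\|_\infty |x^j u|$ yields
$$\|x^j u^2\|_{L^2}\leq \|u\|_\infty\|x^j u\|_{L^2},\qquad j=1,2,$$
and the Sobolev embedding $\|u\|_\infty\leq C\|u\|_s$ closes the estimate. Applying the first part to $u^2$ completes the argument.

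The only real subtlety is the derivative analysis of $m$ at $\xi=0$: the numerator factor $\xi$ absorbs the sign singularity of $|\xi|$ for the first two derivatives without producing $\delta$-type distributions, which is exactly why the weight $r=2$ is unobstructed here. This is in sharp contrast with the group symbol $F(t,\xi)$ of Lemma \ref{derivatives}, where genuine Dirac masses appear starting with $\partial_\xi^3$, and is precisely what forces the flagging conditions $\partial_\xi^j\widehat\varphi(0)=0$ in Theorem \ref{pesosgrupo} for $r\geq 3$.
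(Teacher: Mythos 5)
Your proposal is correct and follows essentially the same route as the paper: pass to the Fourier side, observe that the multiplier $m(\xi)=-i\xi/(1+|\xi|)$ and its first two $\xi$-derivatives are bounded with no Dirac masses appearing (the paper's displayed computation uses exactly the formulas $\partial_\xi m=-i/(1+|\xi|)^2$ and $\partial_\xi^2 m=2i\,\sgn(\xi)/(1+|\xi|)^3$), apply Leibniz and Plancherel, and then invoke the Banach algebra property of $\Im_{s,2}$ for $s>1/2$ to handle $u^2$. The only cosmetic difference is that you spell out the weighted part of the algebra property via $\|x^ju^2\|_{L^2}\leq\|u\|_\infty\|x^ju\|_{L^2}$, which the paper leaves implicit.
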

\begin{proof}
\begin{align*}
\Vert\partial_{x}(1+\mathcal{H}\partial_{x})^{-1}\varphi\Vert_{s,2}^{2} & =\Vert\partial_{x}(1+\mathcal{H}\partial_{x})^{-1}\varphi\Vert_{s}^{2}+\Vert\partial_{x}(1+\mathcal{H}\partial_{x})^{-1}\varphi\Vert_{L_{2}^{2}}^{2}\\
 & \leq\Vert\varphi\Vert_{s}^{2}+\int_{-\infty}^{\infty}\vert x^{2}\partial_{x}(1+\mathcal{H}\partial_{x})^{-1}\varphi\vert^{2} dx\\
 & \leq\Vert\varphi\Vert_{s}^{2}+\int_{-\infty}^{\infty}\Bigl|\partial_{\xi}^{2}\Big(\frac{-i\xi}{1+\vert\xi\vert}\widehat{\varphi}(\xi)\Big)\Bigr|^{2}d\xi\\
  & \leq\Vert\varphi\Vert_{s}^{2}+\int_{-\infty}^{\infty}\Bigl| \frac{2isgn(\xi)}{(1+\vert\xi\vert)^{3}}\widehat{\varphi}(\xi)\Bigl|^{2}d\xi+\int_{-\infty}^{\infty}\Bigl|\frac{-2i}{(1+\vert\xi\vert)^{2}}\partial_{\xi}\widehat{\varphi}(\xi)\Bigl|^{2}d\xi\\
 & \ \ \ \ +\int_{-\infty}^{\infty}\Bigl|\frac{-i\xi}{1+\vert\xi\vert}\partial_{\xi}^{2}\widehat{\varphi}(\xi)\Bigl|^{2}d\xi\\
\end{align*} 
 \begin{align*}
  &\ \ \ \ \ \ \ \ \ \ \ \ \ \ \ \ \ \  \leq\Vert\varphi\Vert_{s}^{2}+4\int_{-\infty}^{\infty}\vert\widehat{\varphi}(\xi)\vert^{2}d\xi+4\int_{-\infty}^{\infty}\vert\partial_{\xi}\widehat{\varphi}(\xi)\vert^{2}d\xi+\int_{-\infty}^{\infty}\vert\partial_{\xi}^{2}\widehat{\varphi}(\xi)\vert^{2}d\xi\\
 &\ \ \ \ \ \ \ \ \ \ \ \ \ \ \ \ \ \ \leq\Vert\varphi\Vert_{s}^{2}+4\Vert\varphi\Vert_{0}^{2}+4\Vert x\varphi\Vert_{0}^{2}+\Vert x^{2}\varphi\Vert_{0}^{2}\\
 & \ \ \ \ \ \ \ \ \ \ \ \ \ \ \ \ \ \  \leq4\Vert\varphi\Vert_{s,2}^{2}.
\end{align*}

Since $s>\frac{1}{2}$ we have that, $\Im_{s,2}$ is a Banach Algebra and thefore if $u\in \Im_{s,2} $, we have that $u^2\in \Im_{s,2}$ y $\rVert Au^2\rVert_{s,2}^2<\infty$. \\
\end{proof}
In order to consider non-integer weights we have to introduce new harmonic analysis tools. Let us first recall the definition of the $A_p$ condition. We shall restrict here to $p\in(1,\infty)$ and the 1-dimensional case $\R$ (see \cite{Mu}).
 
 \begin{definition}\label{definition1} A non-negative function $w\in L^1_{loc}(\R)$ satisfies the $A_p$ inequality with $1<p<\infty\,$  if
 \begin{equation}
 \label{ap}
 \sup_{Q\;\text{interval}}\left(\frac{1}{|Q|}\int_Q w\right)\left(\frac{1}{|Q|}\int_Qw^{1-p'}\right)^{p-1}=c(w)<\infty,
 \end{equation}
 where $1/p+1/p'=1$.
 \end{definition}

  \begin{theorem}\label{theorem6} $($\cite{MuHuWh}$)$  The condition \eqref{ap} is necessary and  sufficient  for the boundedess of the Hilbert transform $\mathcal H$  in $L^p(w(x)dx)$, i.e.
 \begin{equation}
 \label{apb}
( \int_{-\infty}^{\infty}|\mathcal Hf|^pw(x)dx)^{1/p}\leq c^*\, (\int_{-\infty}^{\infty} |f|^pw(x)dx)^{1/p}.
 \end{equation}
 \end{theorem}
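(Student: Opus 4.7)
The plan is to prove both directions of the equivalence separately.

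\medskip

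\emph{Necessity.} Assuming \eqref{apb}, fix an interval $Q=(a,b)$ and test on $f=w^{1-p'}\chi_Q$, which lies in $L^p(w\,dx)$ with $\|f\|_{L^p(w)}^{p}=\int_Q w^{1-p'}$. For $x$ in the right-adjacent interval $Q^{*}=(b,\,2b-a)$ one has $0<x-y\leq 2|Q|$ for every $y\in Q$, so a direct lower bound gives
\begin{equation*}
|\mathcal H f(x)|\geq \frac{1}{2\pi|Q|}\int_Q w^{1-p'}.
\end{equation*}
Inserting this into \eqref{apb} and rearranging yields
\begin{equation*}
\left(\frac{1}{|Q|}\int_{Q^{*}} w\,dx\right)\left(\frac{1}{|Q|}\int_Q w^{1-p'}\right)^{p-1}\leq (2\pi c^{*})^{p}.
\end{equation*}
The symmetric estimate holds on the left-adjacent interval, and a short doubling argument (applying the same bound on $2Q$ in place of $Q^{*}$) upgrades this to the full $A_p$ condition \eqref{ap}.

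\medskip

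\emph{Sufficiency.} Here I would follow the Coifman--Fefferman scheme, which controls $\mathcal H$ by the Hardy--Littlewood maximal operator $M$. The three ingredients are:
\begin{enumerate}
\item[(i)] Muckenhoupt's maximal theorem: $w\in A_p$ implies $M:L^p(w\,dx)\to L^p(w\,dx)$.
\item[(ii)] The reverse Hölder / self-improvement property: $w\in A_p$ implies $w\in A_{p-\varepsilon}$ for some $\varepsilon>0$, equivalent to the $A_\infty$ estimate $w(E)\leq C\bigl(|E|/|Q|\bigr)^{\eta}w(Q)$ whenever $E\subset Q$.
\item[(iii)] A good-$\lambda$ inequality: for some $\eta>0$ and all sufficiently small $\gamma>0$,
\begin{equation*}
w\bigl(\{|\mathcal H f|>2\lambda,\,M f\leq \gamma\lambda\}\bigr)\leq C\gamma^{\eta}\,w\bigl(\{|\mathcal H f|>\lambda\}\bigr).
\end{equation*}
\end{enumerate}
Multiplying (iii) by $p\lambda^{p-1}$, integrating in $\lambda$ over $(0,\infty)$, and absorbing the corresponding term on the right yields $\|\mathcal H f\|_{L^p(w)}\lesssim\|Mf\|_{L^p(w)}$, which together with (i) produces \eqref{apb}.

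\medskip

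The decisive obstacle is (iii). I would establish it by performing a Calderón--Zygmund stopping-time decomposition of the open set $\{|\mathcal H f|>\lambda\}$ into Whitney intervals $\{Q_j\}$. On each $Q_j$ the function $f$ splits as $f=f\chi_{2Q_j}+f\chi_{(2Q_j)^{c}}$: the local piece is controlled via the unweighted $L^2$ boundedness of $\mathcal H$ together with the measure of the exceptional set inside $Q_j$, while the tail piece is dominated pointwise on $Q_j$ by a constant multiple of $Mf$, which on the good set is at most $C\gamma\lambda$. This yields a Lebesgue-measure bound of the form $|\{|\mathcal H f|>2\lambda,\,Mf\leq\gamma\lambda\}\cap Q_j|\leq C\gamma^{2}|Q_j|$, and then (ii) is precisely the tool required to transfer this to the $w$-measure estimate claimed in (iii). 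Executing this transfer with a constant depending only on $[w]_{A_p}$ is the delicate technical point of the entire argument.
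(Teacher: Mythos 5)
The paper offers no proof of this statement: it is quoted from Hunt--Muckenhoupt--Wheeden \cite{MuHuWh} and used as a black box (only the case $p=2$ with the power weights $|x|^{2\theta}$, $0<\theta<\tfrac12$, is actually invoked later, via \eqref{cond|x|}). So there is nothing in the paper to compare your argument against; what you have written is an outline of the now-standard Coifman--Fefferman proof of sufficiency together with the usual testing argument for necessity, and as an outline it is essentially sound. Note, though, that the original proof in \cite{MuHuWh} predates Coifman--Fefferman and proceeds differently (via the conjugate function and a more direct analysis), so your route is the modern textbook one, not the cited one.

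Two points in your sketch deserve care if you were to execute it. First, in the necessity step you must a priori rule out $\int_Q w^{1-p'}=\infty$ (test instead with $(w+\epsilon)^{1-p'}\chi_Q$ or a truncation and pass to the limit), and the passage from your adjacent-interval inequality, which pairs $\int_{Q^{*}}w$ with $\int_Q w^{1-p'}$, to the genuine condition \eqref{ap}, which pairs both integrals over the \emph{same} interval, is exactly where one tests on the two halves of a fixed interval and combines the two resulting estimates; calling this "a short doubling argument" hides a real, if elementary, step. Second, in the good-$\lambda$ inequality the tail piece $\mathcal H(f\chi_{(2Q_j)^{c}})$ is \emph{not} pointwise dominated by $Mf$ on $Q_j$; what is dominated by $C\,Mf$ is its oscillation over $Q_j$, and one must use the maximality of the Whitney interval to produce a reference point $\bar x$ near $Q_j$ with $|\mathcal H f(\bar x)|\leq\lambda$, so that $|\mathcal H f|>2\lambda$ together with $Mf\leq\gamma\lambda$ forces $|\mathcal H(f\chi_{2Q_j})|>\lambda/2$ for small $\gamma$; the weak $(1,1)$ (or $L^2$) bound for $\mathcal H$ then gives the Lebesgue-measure estimate on $Q_j$, and your ingredient (ii) converts it to the weighted one. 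With those two repairs the scheme closes.
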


For further references and comments we refer to  \cite{Do} and \cite{St2}. However, even though  we will be mainly concerned with the case $p=2$, the characterization \eqref{apb} will be the one used in our proof.
In particular, one has that 
\begin{equation}
\label{cond|x|}
|x|^{\alpha}\in A_p\;\;\Leftrightarrow\;\; \alpha\in (-1,p-1).
\end{equation}

We notice that for $0\leq \theta\leq1$ we have that
\begin{equation}
\|D_x^\theta f\|_0\leq c(\|f\|_0+\|D_xf\|_0).
\label{interpolationder}
\end{equation}

and that

\begin{proposition}\label{propcomm}$($\cite{GFFLGP2}$)$
Given  $\phi\in L^{\infty}(\R)$, with $\partial_x^{\alpha}\phi\in L^2(\R)$ for $\alpha=1,2$, then
for any $\theta\in(0,1)$
\begin{equation}\label{commute}
\|J^{\theta}(\phi f)\|_2\le c_{\theta,\phi}\,\|J^{\theta}f\|_2.
\end{equation}
\end{proposition}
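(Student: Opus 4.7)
The plan is to split $J^{\theta}(\phi f)=\phi J^{\theta}f+[J^{\theta},\phi]f$ and treat the two pieces separately. The product term is immediate, since $\phi\in L^{\infty}(\R)$ gives
\[
\|\phi J^{\theta}f\|_{2}\le \|\phi\|_{L^{\infty}}\|J^{\theta}f\|_{2},
\]
so the whole argument reduces to bounding the commutator $[J^{\theta},\phi]$ on $L^{2}$. For this I would appeal to a Calder\'on/Kato--Ponce-type commutator estimate of the form
\[
\|[J^{\theta},\phi]f\|_{2}\le C_{\theta}\,\|\phi'\|_{L^{\infty}}\,\|J^{\theta-1}f\|_{2},\qquad 0<\theta<1,
\]
and then absorb the loss using $\|J^{\theta-1}f\|_{2}\le\|J^{\theta}f\|_{2}$, which is immediate by Plancherel.

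The hypotheses on $\phi$ enter precisely at this step, namely to convert $\phi',\phi''\in L^{2}(\R)$ into $\phi'\in L^{\infty}(\R)$. The bridge is $\phi'\in H^{1}(\R)$ together with the one-dimensional Sobolev embedding $H^{1}(\R)\hookrightarrow L^{\infty}(\R)$. Note that the strength of the hypothesis (two derivatives in $L^{2}$, not one) is exactly what one needs to obtain the pointwise Lipschitz control of $\phi$ required by the commutator estimate, and without it the argument collapses.

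If one prefers to prove the commutator estimate in situ rather than quote it, I would set up the analytic family $T^{z}g:=J^{z}(\phi J^{-z}g)$ on the strip $0\le\operatorname{Re} z\le 1$ and apply Stein's complex interpolation theorem. At $\operatorname{Re} z=0$ the imaginary powers $J^{\pm it}$ are Fourier multipliers of modulus one on $L^{2}$, giving $\|T^{it}\|_{L^{2}\to L^{2}}\le\|\phi\|_{L^{\infty}}$. At $\operatorname{Re} z=1$ the Leibniz identity $J(\phi h)=\phi Jh+[J,\phi]h$ combined with the $L^{2}$-boundedness of $\partial_{x}J^{-1}$ yields $\|T^{1+it}\|_{L^{2}\to L^{2}}\le C\bigl(\|\phi\|_{L^{\infty}}+\|\phi'\|_{L^{\infty}}\bigr)$, with polynomial growth in $|t|$. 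Interpolation at $\operatorname{Re} z=\theta$ then gives $\|T^{\theta}g\|_{2}\le c_{\theta,\phi}\|g\|_{2}$, and setting $g=J^{\theta}f$ recovers \eqref{commute}.

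The main obstacle I anticipate, should one take the Stein interpolation route, is verifying admissibility of the family $T^{z}$: namely its analyticity in $z$ and moderate (sub-$e^{\pi|\operatorname{Im} z|}$) growth of the operator norm along the two vertical edges. Both properties are standard for Bessel potentials and follow from dominated convergence applied to the symbol $(1+\xi^{2})^{z/2}$ after testing against Schwartz functions, but they require some care to obtain uniform bounds in $\operatorname{Im} z$. If instead one cites the commutator estimate as a black box, the argument is essentially complete once the Sobolev embedding step above is in place.
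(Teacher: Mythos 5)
Your argument is correct. Note that the paper itself offers no proof of Proposition \ref{propcomm}: it is quoted verbatim from \cite{GFFLGP2}, where it is established exactly by the interpolation you describe in your second route, namely boundedness of $f\mapsto\phi f$ on $H^0$ (trivially, from $\phi\in L^\infty$) and on $H^1$ (using $\|\partial_x(\phi f)\|_2\le\|\phi'\|_{L^\infty}\|f\|_2+\|\phi\|_{L^\infty}\|\partial_xf\|_2$), together with $[H^0,H^1]_\theta=H^\theta$; your observation that the hypothesis $\partial_x^\alpha\phi\in L^2$, $\alpha=1,2$, is there precisely to give $\phi'\in H^1\hookrightarrow L^\infty$ is exactly the point. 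Your first route, via $J^{\theta}(\phi f)=\phi J^{\theta}f+[J^{\theta},\phi]f$, is also sound but requires one caution: the commutator bound you invoke must be the sharp version valid for $0<\theta<1$, $\|[J^{\theta},\phi]f\|_2\le c_\theta\|\phi'\|_{L^\infty}\|J^{\theta-1}f\|_2$ (as in Dawson--McGahagan--Ponce or Taylor's pseudodifferential calculus), and not the textbook Kato--Ponce form, whose extra term $\|J^{\theta}\phi\|_2\|f\|_{L^\infty}$ cannot be absorbed here since $\|f\|_{L^\infty}$ is not controlled by $\|J^{\theta}f\|_2$ when $\theta\le 1/2$. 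Since that sharp commutator estimate is itself usually proved by the same complex interpolation between $\theta=0$ and $\theta=1$ (Calder\'on's first commutator at the upper endpoint), the two routes are ultimately the same argument, and the direct interpolation of the multiplication operator is the shorter path.
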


\section{Proof of Theorem \ref{Main1}}

\begin{proof}  We restrict to the case $r=2.$

We use Theorem \ref{pesosgrupo},  Lemma \ref{operator} and Theorem A.2 in \cite{Io3} (if $\varphi \in \Im_r$, then $x^{\alpha}\partial_{x}^{\beta}\varphi \in L^2(\mathbb{R})$, for all integers $\alpha$  and $\beta$ such that $0\leq \alpha + \beta \leq r$. Moreover, $\Vert x^{\alpha}\partial_{x}^{\beta}\varphi \Vert_0 \leq C_{\alpha , \beta}\Vert \varphi \Vert_{r} $ ).

For existence of solutions, we consider the integral equation:

\begin{equation}
\Phi(u)(t)=e^{-t\partial_{x}(1+\mathcal{H}\partial_{x})^{-1}}\varphi-\frac{1}{2}\int_{0}^{t}e^{-(t-\tau)\partial_{x}(1+\mathcal{H}\partial_{x})^{-1}}\partial_{x}(1+\mathcal{H}\partial_{x})^{-1}u^{2}(\tau)d\tau \label{exissob}
\end{equation}

Observe that $\Phi (u)\in C([0,T);\Im_{s,2})$ in (\ref{exissob}) belongs to metric space $\mathfrak{X}_{s,2}=\{u\in C([0,T);\Im_{s,2}) \ \vert \  \Vert E(t)\varphi - u(t)  \Vert_{\Im_{s,2}}\leq M\}$ with the metric $$d_{s,T}(u,v)=\sup_{t\in[0,T]}\,\rVert u(t)-v(t)\rVert_{s,2}.$$ 

In fact, (\ref{exissob}) implies that,

\begin{align*}
\Vert \phi(u)(t)-e^{-t\partial_{x}(1+\mathcal{H}\partial_{x})^{-1}}\varphi\Vert_{s,2} & \leq\int_{0}^{t}\Vert e^{-(t-\tau)\partial_{x}(1+\mathcal{H}\partial_{x})^{-1}}\partial_{x}(1+\mathcal{H}\partial_{x})^{-1}u^{2}(\tau)\Vert_{s,2}d\tau\\
 & \leq2C_{s,2}\int_{0}^{t}((t-\tau)^{2}+3)\Vert u(\tau)\Vert_{s,2}^{2}d\tau\\
 & \leq2C_{s,2}(t^{2}+3)\int_{0}^{t}\Vert u(\tau)\Vert_{s,2}^{2}d\tau\\
 & \leq2C_{s,2}(T^{2}+3)L(\Vert u\Vert_{s,2},0)\int_{0}^{t}\Vert u(\tau)\Vert_{s,2}d\tau.
\end{align*}

And if $u \in \mathfrak{X}_{s,2}(T,M) $
$$\rVert u(\tau)\rVert_{s,2}\leq \rVert u(\tau)-e^{\tau A}\varphi\rVert_{s,2}\, +\,\, \rVert e^{\tau A}\varphi \rVert_{s,2}\leq M+ \,\rVert\varphi\rVert_{s,2}.$$
Therefore, if $t\in[0,T]$
\begin{align*}
\Vert \Phi(u)(t)-e^{tA}\varphi\Vert_{s,2} & \leq2C_{s,2}(T{}^{2}+3)L(\Vert u\Vert_{s,2},0)(M+(T^{2}+3)\Vert\varphi\Vert_{s,2})T\\
 & \leq2C_{s,2}(T{}^{2}+3)^{2}L(\Vert u\Vert_{s,2},0)(M+\Vert\varphi\Vert_{s,2})T\\
 & \leq2C_{s,2}q(T)L(\Vert u\Vert_{s,2},0)(M+\Vert\varphi\Vert_{s,2})T.
\end{align*}

By Choosing $T\leq \dfrac{M}{2C_{s,2}q(T)L(\Vert u\Vert_{s,2},0)(M+\Vert\varphi\Vert_{s,2})},$\\
we have that, 
 $$\Vert \Phi(u)(t)-e^{tA}\varphi\Vert_{s,2}  \leq M$$
{\it i,e} $\Phi(u)\in \mathfrak{X}_{s,2}$.\\
 
Next,  we show that $\Phi$ is a contraction in $\mathfrak{X}_{s,2}$.

\begin{align*}
\Vert \Phi(u)(t)-\Phi(v)(t)\Vert_{s,2} & \leq\int_{0}^{t}\Vert e^{-(t-\tau)\partial_{x}(1+\mathcal{H}\partial_{x})^{-1}}\partial_{x}(1+\mathcal{H}\partial_{x})^{-1}(u^{2}-v^{2})\Vert_{s,2}d\tau\\
 & \leq2c(t{}^{2}+3)\int_{0}^{t}\Vert u^{2}(\tau)-v^{2}(\tau)\Vert_{s,2}d\tau\\
 & \leq c_{s,2}(T{}^{2}+3)\int_{0}^{t}\bigl(\Vert u(\tau)\Vert_{s,2}+\Vert v(\tau)\Vert_{s,2}\bigr)\Vert u(\tau)-v(\tau)\Vert_{s,2}d\tau\\
 & \leq c_{s,2}(T{}^{2}+3)\int_{0}^{t}\bigl(M+(t^{2}+3)\Vert\varphi\Vert_{s,2}\bigr)\Vert u(\tau)-v(\tau)\Vert_{s,2}d\tau\\
 & \leq c_{s,2}(T{}^{2}+3)\bigl(M+(t^{2}+3)\Vert\varphi\Vert_{s,2}\bigr)T\sup_{t\in[0,T]}\Vert u(t)-v(t)\Vert_{s,2}.
\end{align*}
  
If we choose $T\leq\dfrac{1}{8c_{s,2}(T{}^{2}+3)\bigl(M+(T^{2}+3)\Vert\varphi\Vert_{s,2}\bigr)}$, we obtain the desired result.\\

 Uniqueness in $\Im_{s,2}$ is a consequence of the theory in  $H^s(\mathbb{R})$.

In order to extend this local solution, let us  obtain  {\it apriori} estimates for $u$ in $\Im_{s,2}$.
  
\begin{align*}
\left\Vert u\right\Vert _{L_{2}^{2}}^{2} & =\int_{-\infty}^{\infty}x^{4}(u(x,t))^{2}dx.
\end{align*}

Then

\begin{align}
\partial_{t}\left\Vert u\right\Vert _{L_{2}^{2}}^{2} & =\partial_{t}\int_{-\infty}^{\infty}x^{4}(u(x,t))^{2}dx \label{eg}\\
 & =2\int_{-\infty}^{\infty}x^{4}u(x,t)u_{t}(x,t)dx \nonumber\\
 & =2\int_{-\infty}^{\infty}x^{4}u(x,t)\left[-\partial_{x}(1+\mathcal{H}\partial_{x})^{-1}u-\frac{1}{2}\partial_{x}(1+\mathcal{H}\partial_{x})^{-1}u^{2}\right]dx \nonumber\\
 & =-2\int_{-\infty}^{\infty}x^{4}u(x,t)\partial_{x}(1+\mathcal{H}\partial_{x})^{-1}udx-\int_{-\infty}^{\infty}x^{4}u(x,t)\partial_{x}(1+\mathcal{H}\partial_{x})^{-1}u^{2}dx \nonumber
\end{align}

Now we estimate  each integral on the right hand side of (\ref{eg}).

First we have

\begin{align}\int_{-\infty}^{\infty}x^{4}u\partial_{x}(1+\mathcal{H}\partial_{x})^{-1}udx & =\int_{-\infty}^{\infty}x^{2}ux^{2}\partial_{x}(1+\mathcal{H}\partial_{x})^{-1}udx \nonumber\\
 & =\langle x^{2}u\ ,\ x^{2}\partial_{x}(1+\mathcal{H}\partial_{x})^{-1}u\rangle_{0}\nonumber\\
 & \leq\left\Vert x^{2}u\right\Vert _{0}\left\Vert x^{2}\partial_{x}(1+\mathcal{H}\partial_{x})^{-1}u\right\Vert _{0}\nonumber\\
 &\leq 4c \Vert u\Vert _{s,2}^2\label{int1}.
\end{align}

Next, we obtain that
\begin{align}\int_{-\infty}^{\infty}x^{4}u(x,t)\partial_{x}(1+\mathcal{H}\partial_{x})^{-1}u^{2}dx & =\int_{-\infty}^{\infty}x^{2}ux^{2}\partial_{x}(1+\mathcal{H}\partial_{x})^{-1}u^{2}dx\nonumber\\
 & \leq\Vert x^{2}u\Vert_{0}\Vert x^{2}\partial_{x}(1+\mathcal{H}\partial_{x})^{-1}u^{2}\Vert_{0}\nonumber\\
 & \leq c\Vert u\Vert_{s,2}\Vert\partial_x(1+\mathcal{H}\partial_{x})^{-1}u^{2}\Vert_{s,2}\nonumber\\
 & \leq c\Vert u\Vert_{s,2}\Vert u^{2}\Vert_{s,2}\nonumber\\
 & \leq c\Vert u\Vert_{L_{2}^{2}}\alpha(T)\Vert u\Vert_{s,2}\nonumber\\
 & \leq \alpha(T)\Vert u\Vert_{s,2}^{2}\label{int2}.
\end{align}

From (\ref{int1}) and (\ref{int2}) we have

\begin{align}
\partial_{t}\left\Vert u\right\Vert _{L_{2}^{2}}^{2} & \leq 4c\Vert u\Vert_{s,2}^{2}+4c\alpha(T)\Vert u\Vert_{s,2}^{2}\nonumber\\
 & \leq\beta(T)\Vert u\Vert_{s,2}^{2}\nonumber\\
 & \leq\beta(T)\left(\Vert u\Vert_{s}^{2}+\Vert u\Vert_{L_{2}^{2}}^{2}\right)\nonumber\\
 & \leq\gamma(T)+\beta(T)\Vert u\Vert_{L_{2}^{2}}^{2},\label{estiglopes}
\end{align}

where $\gamma(T)$, \  $\beta(T)$ are constants obtained from $\Vert u\Vert_{\infty}$ and $\Vert u\Vert_{s}$ .
 Gronwall's inequelity and  (\ref{estiglopes}) imply the result.
 \end{proof}

\section{Proof of Theorem \ref{Main2}} 

\begin{proof}
  
 We will restrict ourselves to the most interesting case $2<r<\frac52$. In this setting $r=2+\theta,$ with $0<\theta<\frac{1}{2}.$ 
 
 We also consider first the local existence problem via the contraction principle as we did before. Let us fix $T>0.$
  
We estimate the group in the weighted norm:  $$\||x|^rE(t)\varphi\|_0=\||x|^\theta x^2E(t)\varphi\|_0=\|D^\theta_\xi F_2(t,\cdot,\widehat{\varphi})\|_0,$$
where we recall that
  \begin{equation*}
  \begin{aligned}
  F_2(t,\xi,\widehat{\varphi})&=\partial^2_\xi(F(t,\xi)\widehat{\varphi})\\
  &=F(t,\xi)\Big(\frac{2it\sgn(\xi)}{(1+|\xi|)^3}\widehat{\varphi} -\frac{t^2}{(1+|\xi|)^4}\widehat{\varphi}-\frac{2it}{(1+|\xi|)^2}\partial_\xi\widehat{\varphi}+\partial^2_\xi\widehat{\varphi}\Big)\\
  &=\sum_1^4F_{2,j}(t,\xi,\widehat{\varphi}).
  \end{aligned}
  \end{equation*}
  
  For $F_{2,4}$ we easily obtain from \eqref{commute}, with $\phi$ being the group $F$, that
  
   \begin{equation}\label{linear4}
  \|D^\theta_\xi F_{2,4}\|_0\leq c_T(\|\varphi\|_0+\||x|^r\varphi\|_0).
   \end{equation}

For the terms $F_{2,2}$ and $F_{2,3}$ it follows from \eqref{interpolationder}  that

\begin{equation}\label{linear2}
\begin{aligned}
\|D_\xi^\theta F_{2,2}\|_0&\leq c\, t^2(\|F_{2,2}\|_0 +\|D_\xi F_{2,2}\|_0)\\
&\leq c_T(\|\frac{F(t,\cdot)}{(1+|\cdot|)^4}\widehat{\varphi}\|_0 +\|\partial_\xi(\frac{F(t,\cdot)}{(1+|\cdot|)^4}\widehat{\varphi})\|_0)\\
&\leq c_T (\|\varphi\|_0+\|x\varphi\|_0),
\end{aligned}
\end{equation}
and

\begin{equation}
\begin{aligned}\label{linear3}
\|D_\xi^\theta F_{2,3}\|_0&\leq c \,t(\|F_{2,3}\|_0 +\|D_\xi F_{2,3}\|_0)\\
&\leq c_T(\|\frac{F(t,\cdot)}{(1+|\cdot|)^2}\partial_\xi\widehat{\varphi}\|_0 +\|\partial_\xi(\frac{F(t,\cdot)}{(1+|\cdot|)^2}\partial_\xi\widehat{\varphi})\|_0)\\
&\leq c_T (\|x\varphi\|_0+\|x^2\varphi\|_0).
\end{aligned}
\end{equation}
Last, for $F_{2,1}$ we use the continuity property of the Hilbert transform with respect to the $A_2$ weights $|x|^{2\theta},$ with $0<\theta<\frac{1}{2}$, and then proceed as we did above:

\begin{equation}\label{linear1}
\begin{aligned}
\|D_\xi^\theta F_{2,1}\|_0&= c\,t\||x|^\theta \mathcal H E(t)\psi\|_0 \\
&\leq c_T \||x|^\theta E(t)\psi\|_0\\
&\leq c_T\|D_\xi^\theta(\frac{F(t,\cdot)\widehat\varphi}{(1+|\cdot|)^3})\|_0\\
&\leq c_T(\|F(t,\cdot)\frac{\widehat\varphi}{(1+|\cdot|)^3}\|_0+\|D_\xi \frac{F(t,\cdot)\widehat\varphi}{(1+|\cdot|)^3}\|_0)\\
&\leq c_T(\|\varphi\|_0+\|x\varphi\|_0),
\end{aligned}
\end{equation}

where $\widehat\psi(\xi)=\frac{\widehat\varphi(\xi)}{(1+|\xi|)^3}.$
Therefore, collecting the information in \eqref{linear4}-\eqref{linear1}, we have that

\begin{equation}
\label{wlinearestimate}
\||x|^rE(t)\varphi\|_0\leq c_T (\|\varphi\|_0+\|\varphi\|_{L_r^2})\leq c_T\|\varphi\|_{s,r}.
\end{equation}

With the help of the former estimates and Plancherel, we now estimate the integral equation \ref{exissob} in the weighted Sobolev norm 

\begin{equation}
\begin{aligned}
\||x|^r\Phi(u)(t)\|_0&=\|D^\theta_\xi\partial^2_\xi\widehat{\Phi(u)(t)}\|_0\\
&\leq\|D_\xi^\theta F_2(t,\cdot,\widehat{\varphi})\|_0+\|D^\theta_\xi\int_0^tF_2(t-\tau,\cdot,\frac{\widehat{u\partial_xu}}{1+|\cdot|})\,\,d\tau\,\, \|_0\\
&\leq c_T\|\varphi\|_{s,r}+\int_0^T\|D_\xi^\theta F_2(t-\tau,\cdot,\frac{\widehat{u\partial_xu }}{1+|\cdot|})\|_0\,\,d\tau\\
&\leq c_T \|\varphi\|_{s,r}+\sum^4_{j=1}\int_0^T\|D_\xi^\theta F_{2,j}(t-\tau,\cdot,\frac{\widehat{u\partial_xu }}{1+|\cdot|})\|_0\,\,d\tau .
\end{aligned}
\label{contraction}
\end{equation}
We estimate each term in the  integrand similarly as we did in the linear part 
\begin{equation}
\begin{aligned}
\|D_\xi^\theta F_{2,1}(t-\tau,\cdot,\frac{\widehat{u\partial_xu }}{1+|\cdot|})\|_0&\leq
c_T(\|\frac{\xi\,\,\widehat{v}}{(1+|\xi|)^4}\|_0+\|\partial_{\xi}\big(\frac{F(t-\tau,\cdot)\sgn(\xi)i\xi\widehat{v}}{(1+|\xi|)^4}\big)\|_0)\\
&\leq c_T(\|v\|_0+ (t-\tau)\|\frac{\xi\,\,\widehat{v}}{(1+|\xi|)^6}\|_0+ \|\frac{\widehat{v}}{(1+|\xi|)^4}\|_0 \\
&+\|\frac{\xi\,\widehat{xv}}{(1+|\xi|)^4}\|_0+\|\frac{\xi\,\widehat{v}}{(1+|\xi|)^5}\|_0)\\
&\leq c_T(\|v\|_0+\|xv\|_0)\\
&\leq c_T (\|v\|_0+\|v\|_{L^2_1})\\
&\leq c_T \|u\|^2_{s,r},
\end{aligned}
\label{intt1}
\end{equation}

\begin{equation}
\begin{aligned}
\|D_\xi^\theta F_{2,2}(t-\tau,\cdot,\frac{\widehat{u\partial_xu }}{1+|\cdot|})\|_0&\leq c\,
(t-\tau)^2(\|\frac{\xi\,\widehat{v}}{(1+|\xi|)^5}\|_0+\|\partial_\xi\big(\frac{F(t-\tau,\cdot) \xi\,\widehat{v}}{(1+|\xi|)^5}\big)\|_0 )\\
& \leq (c_T(\|v\|_0 + (t-\tau)\|\frac{\xi\,\widehat{v}}{(1+|\xi|)^7}\|_0+\|\frac{\,\widehat{v}}{(1+|\xi|)^5}\|_0 \\
&+ \|\frac{\xi\,\widehat{xv}}{(1+|\xi|)^5}\|_0+\|\frac{\xi\,\widehat{v}}{(1+|\xi|)^6}\|_0)\\
&\leq c_T (\|v\|_0+\|v\|_{L^2_1})\\
&\leq c_T \|u\|^2_{s,r},
\end{aligned}
\label{intt2}
\end{equation}

\begin{equation}
\begin{aligned}
\|D_\xi^\theta F_{2,3}(t-\tau,\cdot,\frac{\widehat{u\partial_xu }}{1+|\cdot|})\|_0&\leq
(t-\tau)\|D_\xi^\theta(\frac{F(t-\tau,\cdot)}{(1+|\xi|)^2}\partial_\xi(\frac{i\xi\,\widehat{v}}{1+|\xi|}))\|_0\\
&\leq c_T (\|\frac{F(t-\tau,\cdot)}{(1+|\xi|)^2}\partial_\xi(\frac{i\xi\,\widehat{v}}{1+|\xi|})\|_0+\|\partial_\xi(\frac{F(t-\tau,\cdot)}{(1+|\xi|)^2}\partial_\xi(\frac{i\xi\,\widehat{v}}{1+|\xi|}))\|_0)\\
&\leq c_T (\|v\|_0+\|xv\|_0+\|x^2v\|_0)\\
&\leq c_T (\|v\|_0+\|v\|_{L^2_2})\\
&\leq c_T \|u\|^2_{s,r}.
\end{aligned}
\label{int3}
\end{equation}

\begin{equation}
\begin{aligned}
\|D_\xi^\theta F_{2,4}(t-\tau,\cdot,\frac{\widehat{u\partial_xu }}{1+|\cdot|})\|_0&\leq
\|J^\theta_\xi F(t-\tau,\cdot)\partial^2_\xi(\frac{i\xi\,\widehat{v}}{1+|\xi|})\|_0\\
&\leq c_T \| J^\theta_\xi \partial^2_\xi(\frac{i\xi\,\widehat{v}}{1+|\xi|})\|_0\\
&\leq c_T (\|J^\theta_\xi\frac{\xi}{1+|\xi|}\partial_\xi^2\widehat{v}\|_0 +\|J^\theta_\xi\frac{1}{(1+|\xi|)^2}\partial_\xi\widehat{v}\|_0+\|J^\theta_\xi\frac{\sgn(\xi)\,\widehat{v}}{(1+|\xi|)^3}\|_0)\\
&\leq c_T (\|J^\theta_\xi\partial_\xi^2\widehat{v}\|_0 +\|\frac{1}{(1+|\xi|)^2}\partial_\xi\widehat{v}\|_0+\|\partial_\xi(\frac{1}{(1+|\xi|)^2}\partial_\xi\widehat{v})\|_0\\
&+\|\langle x\rangle^\theta\mathcal H\psi\|_0)\\
&\leq c_T(\|v\|_{L^2_r} +\|v\|_{L^2_1}+\|v\|_{L^2_2}+\|\langle x\rangle^\theta\psi\|_0)\\
&\leq c_T (\|v\|_0+\|v\|_{L^2_r}+ \|J^\theta_\xi\frac{\widehat{v}}{(1+|\xi|)^3}\|_0)\\
&\leq c_T (\|v\|_0+\|v\|_{L^2_r}+\|\frac{\widehat{v}}{(1+|\xi|)^3}\|_0+\|\partial_\xi(\frac{\widehat{v}}{(1+|\xi|)^3})\|_0)\\
&\leq c_T \|u\|^2_{s,r},
\end{aligned}
\label{int4}
\end{equation}

 where $\widehat{\psi}=\frac{\widehat{v}}{(1+|\xi|)^3}$ and $v=u^2$.
 
The rest of the proof follows exactly the same arguments as in the proof of Theorem \ref{integercase}, so we omit the details.\end{proof}
 
\section{Proof of Theorem \ref{Main3}} 
\vskip .2in

By hypothesis we consider $u\in C([0,T]:\Im_{s,2})$ the solution of the rBO initial value problem with  initial data, $\varphi,$ having mean value $\int \varphi(x)\,dx\geq 0.$ We also assume that for  $t_1=0<t_2<T$ it holds  $u(t_j)\in \Im_{s,\frac52}, j=1,2.$

 Again, we write $r=\frac52=2+\frac12$. Representing the solution $u$ with the help of Duhamel's formula in Fourier space we have that
 \begin{equation}
\begin{aligned}
\partial_\xi^2\widehat u(t,\xi)= \sum^4_{j=1}F_{2,j}(t,\xi,\widehat{\varphi})+\int_0^t\, F_{2,j}(t-\tau,\cdot,\frac{\widehat{u\partial_xu }}{1+|\cdot|})\,\,d\tau).
\end{aligned}
\label{ill2der}
\end{equation}

It can easily be checked from the hypothesis on the initial data and  by employing the same arguments as in \eqref{linear4}-\eqref{linear3} and \eqref{intt1}-\eqref{int3} that 

\begin{equation}
D_\xi^\frac12(\sum_{j=2}^4F_{2,j}(t,\cdot,\varphi)+\sum_{j=1}^3\int_0^t\, F_{2,j}(t-\tau,\cdot,\frac{\widehat{u\partial_xu }}{1+|\cdot|})\,\,d\tau )\in L^2(\R),
\label{termsok}
\end{equation}

for every $t\in[0,T].$

Therefore it follows from \eqref{ill2der} and \eqref{termsok} that
 
\begin{equation}
\label{conclusion1}
D_\xi^\frac12(F_{2,1}-\int_0^t F_{2,4}\,d\tau)=D_\xi^{\frac12}\Big( F(t,\xi)\frac{2it\sgn(\xi)}{(1+|\xi|)^3}\widehat{\varphi}-\frac12\int_0^t\,F(t-\tau,\xi)\partial^2_\xi(\frac{i\xi\widehat{v}(\tau,\xi)}{1+|\xi|})\,\,d\tau \Big)
\end{equation}

 belongs to $L^2(\R)$ at time $t=t_2.$
 
Let us introduce a cutoff function $\chi\in C^{\infty}_0(\R)$, supp $\chi\subseteq [-2\epsilon,2\epsilon]$ and $\chi\equiv 1$
in $(-\epsilon,\epsilon)$ and split $F_{2,1}$ as 
\begin{equation}
\label{}
\begin{aligned}
F_{2,1}(t,\xi,\varphi)&=F(t,\xi)\frac{2it\sgn(\xi)\widehat{\varphi}(\xi)}{(1+|\xi|)^3}(\chi(\xi)+(1-\chi(\xi)))\\
&=G_1+G_2.
\end{aligned}
\end{equation}

With the help of the usual estimates it is easy to verify that $D_\xi^\frac12 G_2\in L^2(\R)$ at any $t\in[0,T]$.

Next, we consider $G_1$

\begin{equation}
\begin{aligned}
G_1&=2it\chi(\xi)\sgn(\xi)\widehat{\varphi}(\xi)+2it\chi(\xi)\sgn(\xi)\widehat{\varphi}(\xi)(\frac{F(t,\xi)}{(1+|\xi|)^3}-1)\\
&=G_{1,1}+G_{1,2}.
\end{aligned}
\end{equation}

Since $\frac{F(t,\xi)}{(1+|\xi|)^3}-1$ vanishes at $\xi=0$ and it is bounded by 2, it follows that $D_\xi^\frac12 G_{1,2}\in L^2(\R)$ for any $t\in[0,T]$ as well.

The same strategy can be applied by subtracting  $\widehat\varphi (0)$ and therefore we can write up $G_1$ as

\begin{equation}
\label{conclusion2}
G_1=2it\chi(\xi)\sgn(\xi)\widehat{\varphi}(0)+G,
\end{equation}

where $D_\xi^\frac12 G\in L^2(\R)$ for any $t\in[0,T].$

We proceed in a similar fashion with the integral part in \eqref{conclusion1} and conclude that 
\begin{equation}
\label{conclusion3}
\frac12\int_0^t\,F(t-\tau,\xi)\partial^2_\xi(\frac{i\xi\widehat{v}(\tau,\xi)}{1+|\xi|})\,\,d\tau=i\chi(\xi)\sgn(\xi)\,\int_0^t\,\widehat{v}(\tau,0)\,d\tau+H,
\end{equation}

where $D_\xi^\frac12 H\in L^2(\R)$ for any $t\in[0,T].$

To end the proof we notice that from \eqref{conclusion1}, \eqref{conclusion2} and \eqref{conclusion3} it follows that

\begin{equation}
\label{conclusion3}
D_\xi^\frac12 (\sgn(\xi)\chi(\xi)(2t_2\widehat{\varphi}(0)+\int_0^{t_2}\,\widehat{v}(\tau,0)\,d\tau))\in L^2(\R),
\end{equation}

and therefore as a consequence of a pointwise homogeneous derivative introduced in \cite{St} we obtain from Proposition 3 in \cite{FoPo}  that
\begin{equation}
\label{conclusion4}
2t_2\widehat{\varphi}(0)+\int_0^{t_2}\,\widehat{v}(\tau,0)\,d\tau=0.
\end{equation}

We use now the fact that $v=u^2$ and rewrite \eqref{conclusion4} as
\begin{equation}
\label{conclusion5}
2t_2\widehat{\varphi}(0)+\int_0^{t_2}\,\int_{-\infty}^\infty\,u^2(\tau,x)\,dx\,d\tau=0.
\end{equation}

The continuity of the solution $u$ allows us to conclude from \eqref{conclusion5} that $u\equiv 0,$ and this completes the proof.


\begin{thebibliography}{99}
\small
\bibitem{An}J. Angulo, M. Scialom and   C. Banquet, \emph{The regularized BenjaminOno and BBM equations:
Well-posedness and nonlinear stability}, J. Differential Equations {\bf 235} (2007) 1--30.

\bibitem{AlBo} J. P. Albert and J. L. Bona, \emph{Comparisons between model equations for long waves}, J. Nonlinear Sci., {\bf 1} (1991), pp. 345--374 

\bibitem{Be}  T. B. Benjamin, \emph{Internal waves of permanent
form in fluids of great depth}, J. Fluid Mech. {\bf 29} (1967)
559--592.


\bibitem{BoKa} J. Bona and H. Kalisch, \emph{Models for internal waves in deep water}, Discrete Contin. Dyn. Syst. {\bf 6} (2000) 1--20.

\bibitem{BrGa} H. Brezis and T Gallouet, \emph{Nonlinear Schr\"odinger evolution equations}, Nonlinear Anal. TMA., {\bf 4} (1980) 677--681.





  \bibitem{BuPl}  N. Burq and F. Planchon, \emph{On the well-posedness of the Benjamin-Ono equation},
Math. Ann. {\bf 340} (2008) 497--542.

 \bibitem{Do}J. Duoandikoetxea, \emph{Fourier Analysis}, 
Grad. Studies in Math. {\bf 29} Amer. Math. Soc.

\bibitem{EKPV1} L. Escauriaza, C. E. Kenig, G. Ponce and L. Vega, 
\emph{On uniqueness properties of solutions of the k-generalized KdV equations},
J. Funct.  Anal.  {\bf 244} (2007)  504--535.

\bibitem{EKPV2} L. Escauriaza, C. E. Kenig, G. Ponce and L. Vega,
 \emph{The Sharp Hardy Uncertainty Principle for Schr\"odinger 
Evolutions}, to appear in Duke Math. J.

 \bibitem{FoPo} G. Fonseca and G. Ponce,\emph{ The I.V.P for the Benjamin-Ono equation in weighted
 Sobolev spaces},   J. Funct.  Anal. {\bf 260} (2011) 436--459.

\bibitem{GFFLGP1} G. Fonseca, F. Linares, and G. Ponce, \emph{The I.V.P for the Benjamin-Ono equation in weighted Sobolev spaces II}, J. Funct.  Anal.
{\bf 262} (2012) 2031--2049.

\bibitem{GFFLGP2} G. Fonseca, F. Linares, and G. Ponce, \emph{The IVP for the dispersion generalized  Benjamin-Ono equation in weighted
Sobolev spaces}, \emph{To appear  Ann. I. H. Poincaré-AN} (2012).






\bibitem{MuHuWh} R. Hunt, B. Muckenhoupt,  and R. Wheeden, \emph 
{Weighted norm inequalities for the conjugate  function
and  Hilbert transform}, Trans. AMS. {\bf 176} (1973) 227--251.

\bibitem{Io1} R. J. Iorio, \emph{On the Cauchy problem for the
Benjamin-Ono equation},   Comm. P. D. E.  {\bf 11}
   (1986) 1031--1081.
   
   \bibitem{Io2} R. J. Iorio, \emph{Unique continuation principle 
for the Benjamin-Ono equation},   Diff. and Int. Eqs.,  {\bf 16}
   (2003) 1281--1291.
   
\bibitem{Io3} R. J. Iorio and V. Iorio, \emph{Fourier analysis and partial differential equations}, Cambridge University Press, 2001.    
   
\bibitem{Kal} H. Kalisch, \emph{Error analysis of a spectral projection of the regularized BenjaminOno equation}, BIT {\bf 45} (2005) 69--89.

 \bibitem{Ka} T. Kato, \emph{On the Cauchy problem for the (generalized) Korteweg-de Vries equation},
   Advances in Mathematics Supplementary Studies, Studies in Applied Math. {\bf 8}
 (1983) 93--128.
 
 \bibitem{KaPo} T. Kato and G. Ponce, \emph{Commutator estimates and the Euler and Navier-stokes equations}, Comm. Pure Appl. Math. {\bf 41}, (1988) 891--907.


\bibitem{KePoVe3}  C. E. Kenig, G. Ponce, and L. Vega, \emph{On the unique continuation of solutions to the generalized KdV equation},  Math.
Res. Letters {\bf10} (2003) 833--846.


\bibitem{KdV} D. J. Korteweg and G. de Vries, \emph{On the change of form of long waves advancing in a rectangular canal, and on a new
type of long stationary waves}, Philos. Mag. 5 {\bf 39} (1895) 22--443.

\bibitem{KoTz2} H. Koch and  N. Tzvetkov, \emph{Nonlinear wave 
interactions for the  Benjamin-Ono equation.}, Int. Math. Res. 
Not., {\bf 30} (2005),  1833--1847.

\bibitem{LiPo} F. Linares and G. Ponce, \emph{Introduction to Nonlinear Dispersive Equations}, Universitext. Springer 2009.

\bibitem{MoSaTz} L. Molinet, J.C. Saut and  N. Tzvetkov, \emph {Ill-
posedness issues for the  Benjamin-Ono and related equations}, 
SIAM J. Math. Anal. {\bf 33} (2001) 982-988.

\bibitem{Mu} B. Muckenhoupt, \emph {Weighted norm inequalities for the 
Hardy maximal function}, Trans. AMS. {\bf 165} (1972) 207--226.


\bibitem{NaPo} J. Nahas and G. Ponce, \emph{On the persistent properties of solutions to semi-linear Schr\"odinger equation},
Comm. P.D.E. {\bf 34} (2009) 1--20.

\bibitem{On} H. Ono, \emph{Algebraic solitary waves on stratified fluids},  J. Phy. Soc. Japan {\bf 39} (1975) 1082--1091.

\bibitem{Po} G. Ponce, \emph{On the global well-posedness of  the Benjamin-Ono equation}, Diff. Int. Eqs. {\bf 4} (1991) 527--542.


\bibitem{St} E. M. Stein, \emph{The Characterization of Functions Arising as Potentials}, Bull. Amer. Math. Soc. {\bf 67}
(1961) 102--104.

\bibitem{St2} E. M. Stein,  {Harmonic Analysis},  Princeton
University Press (1993).



 
\end{thebibliography}
\end{document}